\documentclass[11pt]{article}
\usepackage{t1enc}
\usepackage{lmodern}
\usepackage[T2A,T1]{fontenc}
\usepackage[utf8]{inputenc}
\usepackage[russian,english]{babel}
\usepackage{amsmath,amssymb,amsfonts,amsthm,mathrsfs,textcomp,url,bbm,cancel,comment,enumitem,mathtools,chngcntr}
\usepackage{graphicx}
\usepackage{float}
\usepackage[all,ps]{xy}
\usepackage[colorlinks,urlcolor=cyan,citecolor=MidnightBlue,linkcolor=MidnightBlue]{hyperref}
\usepackage[dvipsnames]{xcolor}
\usepackage[margin=2.5cm]{geometry}
\usepackage{longtable}
\usepackage{tikz}
\makeatletter
\newcommand{\affil}[1]{\gdef\@affil{\textsc{#1}}}
\newcommand{\address}[1]{\gdef\@address{#1}}
\newcommand{\email}[1]{\gdef\@email{\url{#1}}}
\newcommand{\@endstuff}{\par\vspace{\baselineskip}\noindent\footnotesize
  \begin{tabular}{@{}l}
    \@affil\\
    \@address\\
    \textit{E-mail address:} \@email
  \end{tabular}}
\AtEndDocument{\@endstuff}
\makeatother

\usetikzlibrary{matrix,calc,decorations,positioning}
\pgfkeys{/tikz/.cd,
  alt double distance/.initial=5pt,
  alt double step/.initial=1pt,}
\pgfdeclaredecoration{double deco}{initial}
{
  \state{initial}[width=\pgfkeysvalueof{/tikz/alt double step},next state=cont] {
    \pgfmoveto{\pgfpoint{\pgfkeysvalueof{/tikz/alt double step}}{\pgfkeysvalueof{/tikz/alt double distance}/2}}
    \pgfpathlineto{\pgfpoint{0.3\pgflinewidth}{\pgfkeysvalueof{/tikz/alt double distance}/2}}
    \pgfpathmoveto{\pgfpoint{0.3\pgflinewidth}{-\pgfkeysvalueof{/tikz/alt double distance}/2}}
    \pgfpathlineto{\pgfpoint{1pt}{-\pgfkeysvalueof{/tikz/alt double distance}/2}}
    \pgfcoordinate{lastup}{\pgfpoint{1pt}{\pgfkeysvalueof{/tikz/alt double distance}/2}}
    \pgfcoordinate{lastdown}{\pgfpoint{1pt}{-\pgfkeysvalueof{/tikz/alt double distance}/2}}}
  \state{cont}[width=\pgfkeysvalueof{/tikz/alt double step}]{
    \pgfmoveto{\pgfpointanchor{lastup}{center}}
    \pgfpathlineto{\pgfpoint{\pgfkeysvalueof{/tikz/alt double step}}{\pgfkeysvalueof{/tikz/alt double distance}/2}}
    \pgfcoordinate{lastup}{\pgfpoint{\pgfkeysvalueof{/tikz/alt double step}}{\pgfkeysvalueof{/tikz/alt double distance}/2}}
    \pgfmoveto{\pgfpointanchor{lastdown}{center}}
    \pgfpathlineto{\pgfpoint{\pgfkeysvalueof{/tikz/alt double step}}{-\pgfkeysvalueof{/tikz/alt double distance}/2}}
    \pgfcoordinate{lastdown}{\pgfpoint{\pgfkeysvalueof{/tikz/alt double step}}{-\pgfkeysvalueof{/tikz/alt double distance}/2}}}
  \state{final}[width=0pt]
  { 
    \pgfmoveto{\pgfpointdecoratedpathlast}}}
\tikzset{alt double/.style={decorate,decoration=double deco}}
\allowdisplaybreaks

\setlist[enumerate]{itemsep=0mm}

\newcommand{\toverset}[2]{%
  \mathop{#2}\limits^{\vbox to -.1ex{\kern-0.4ex\hbox{$\scriptstyle #1$}\vss}}}
\newcommand{\tightoverset}[2]{%
  \mathop{#2}\limits^{\vbox to -.5ex{\kern-0.4ex\hbox{$\scriptstyle #1$}\vss}}}
\renewcommand{\underset}[2]{%
  \mathop{#2}\limits_{\vbox to -.5ex{\kern-1.6ex\hbox{$\scriptstyle #1$}\vss}}}
\newcommand{\tightunderset}[2]{%
  \mathop{#2}\limits_{\vbox to -.5ex{\kern-1.8ex\hbox{$\scriptstyle #1$}\vss}}}

\def\Z{\mathbb{Z}}

\def\R{\mathbb{R}}
\def\C{\mathbb{C}}

\def\RP{\mathbb{R}P}
\def\CP{\mathbb{C}P}

\def\AA{\mathscr{A}}

\def\FF{\mathscr{F}}

\def\into{\hookrightarrow}
\def\imto{\looparrowright}

\def\ol{\overline}

\def\la{\langle}
\def\ra{\rangle}

\def\id{\mathop{\rm id}}

\def\sw1{{w_1{\rm s}}}
\def\sc1{{c_1{\rm s}}}

\newenvironment{thmref}[1]%
{\phantomsection\par\addvspace{.5\baselineskip}\noindent\textbf{Theorem \ref{#1}.\ignorespaces}\begin{em}}%
  {\end{em}\par\addvspace{.5\baselineskip}}%
\newenvironment{prf}[1][\unskip]%
{\par\addvspace{.5\baselineskip}\noindent\textbf{Proof #1.\enspace\ignorespaces}}%
{~$\square$\par\addvspace{.5\baselineskip}}%
{\par\addvspace{.5\baselineskip}\noindent\textbf{Sketch of the proof #1.\enspace\ignorespaces}}%
{~$\square$\par\addvspace{.5\baselineskip}}%
{\par\addvspace{.5\baselineskip}\noindent\textit{Claim.\enspace\ignorespaces}\begin{em}}%
  {\end{em}\par\addvspace{.5\baselineskip}}%
{\par\addvspace{.5\baselineskip}\noindent\textit{Proof #1.\enspace\ignorespaces}}%
{~$\diamond$\par\addvspace{.5\baselineskip}}%
{\par\addvspace{.5\baselineskip}\noindent\textit{Remark.\enspace\ignorespaces}}%
{\lpar\addvspace{.5\baselineskip}}%
\newenvironment{subjclass}[1]%
{\par\noindent\begin{small}\textit{#1 Mathematics Subject Classification.\enspace\ignorespaces}}%
  {\end{small}\par}%
\newenvironment{key}%
{\par\noindent\begin{small}\textit{Key words and phrases.\enspace\ignorespaces}}%
  {\end{small}\par}%
{\par\addvspace{.5\baselineskip}\noindent\begin{em}}%
  {\end{em}\par\addvspace{.5\baselineskip}}%
{\begin{ex} ~ \vspace{-.5em}\begin{enumerate}}%
    {\end{enumerate}\end{ex}}%
{\begin{figure}[H]
    \begin{center}
      \centering\includegraphics[scale=0.4]{#1}
      \begin{changemargin}{2cm}{2cm}
        \caption{\footnotesize\hangindent=1.4cm #2}
      \end{changemargin}}%
    {\vspace{-1cm}
    \end{center}
  \end{figure}}%
{\begin{figure}[H]
    \begin{center}
      \centering\includegraphics[scale=0.35]{#1}
      \begin{changemargin}{2cm}{2cm}
        \footnotesize{#2}
      \end{changemargin}}%
    {\vspace{-1cm}
    \end{center}
  \end{figure}}%
\newtheorem{thm}{Theorem}%
\newtheorem{lemma}{Lemma}[subsection]%
\newtheorem{prop}[lemma]{Proposition}%
\newtheorem{crly}[lemma]{Corollary}%
\newtheorem{ques}[lemma]{Question}%
\theoremstyle{definition}
\newtheorem{defi}[lemma]{Definition}%
\newtheorem{rmk}[lemma]{Remark}%
\newtheorem{ex}[lemma]{Example}%
\newcommand{\ZeroRoman}[1]{%
\ifcase\value{#1}\relax 0\else\Roman{#1}\fi}

\newcounter{t}

\numberwithin{equation}{section}
\numberwithin{figure}{section}
\title{
  On quasi-holomorphic homotopies of immersions\\of 3-manifolds into 5-manifolds
}
\author{András Csépai}
\affil{HUN-REN Alfréd Rényi Institute of Mathematics}
\address{Reáltanoda utca 13--15, Budapest, H-1053 Hungary}
\email{csepai@renyi.hu}
\date{}
\begin{document}

\maketitle

\begin{abstract}
  The notion of a quasi-holomorphic homotopy of two immersions of a $3$-manifold into a $5$-manifold extends the notion of their regular homotopy by also allowing the homotopy to pass through instances of maps with an isolated singularity around which the path of the homotopy forms a cross-cap (complex Whitney umbrella). We describe the local form of such homotopies and explain connections with the theory of holomorphic map germs from $\C^2$ to $\C^3$. 
  Our main result is a complete a description of how the fundamental group of the complement of the image of an immersion of a $3$-manifold into a $5$-manifold (i.e. its knot group) changes under a quasi-holomorphic homotopy.
  As a corollary we will see that certain quasi-holomorphic homotopies of the standard embedding of the $3$-sphere into the $5$-sphere do not change its knot group.
\end{abstract}

\begin{subjclass}{2020}
  57R42 (primary); 32S05, 57K45, 57R45, 58K05, 58K15 (secondary)
\end{subjclass}

\begin{key}
  quasi-holomorphic map; knot group; homotopic immersions; holomorphic map germ
\end{key}

\section{Introduction}
\label{sec:intro}

A regular homotopy $f_s,~s\in[0,1]$ of two stable immersions $f_0,f_1\colon M^3\imto N^5$ is such that its trivial extension $F\colon M^3\times[0,1]\to N^5\times[0,1]$ defined by $F(x,s):=(f_s(x),s)$ is an immersion which can be assumed to be stable. We extend this to the notion of \textit{quasi-holomorphic homotopy} where instead of $F$ being a stable immersion, we only expect it to be a stable \textit{quasi-holomorphic} map in the sense of \cite{qhol}. (Stable) quasi-holomorphic maps are defined as smooth maps between real manifolds whose germs are real suspensions of (stable) holomorphic map germs in the appropriate local coordinates. The only stable holomorphic germ, which is not regular, in the dimensions of the map $F$ is the cross-cap (complex Whitney umbrella)
$$\C^2\to\C^3,(u,v)\mapsto(u^2,uv,v),$$
hence the notion of quasi-holomorphic homotopy extends the notion of regular homotopy by allowing $F$ to have isolated cross-cap points. 
Quasi-holomorphic homotopies of immersions naturally occur in the theory of holomorphic map germs from $\C^2$ to $\C^3$; see subsection \ref{ssec:motiv}.

An immersion of a $3$-manifold into a $5$-manifold under a quasi-holomorphic homotopy changes by four types of Reidemeister type moves, namely \textit{triple point moves}, \textit{cross-cap moves} and \textit{elliptic} and \textit{hyperbolic self-tangency moves}. We will thoroughly describe all of them in subsection \ref{ssec:moves}. Our main results concern the fundamental groups of the complements of the images of quasi-holomorphically homotopic immersions, which we also call their \textit{knot groups}. These are the following.

\begin{thm}
  \label{thm:noell}
  If $f_s\colon M^3\to N^5,~s\in[-1,1]$ is a quasi-holomorphic homotopy 
  without elliptic self-tangency moves, then the groups $\pi_1(N^5\setminus f_{-1}(M^3))$ and $\pi_1(N^5\setminus f_1(M^3))$ are isomorphic.
\end{thm}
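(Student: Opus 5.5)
We decompose the homotopy into elementary moves and check each one separately. After a small perturbation the quasi-holomorphic homotopy $f_s$ is generic. Then $f_s$ is a stable immersion, and the isotopy type of its image is locally constant in $s$, except at finitely many parameter values $s_1<\dots<s_k$. At each $s_i$ exactly one of the four Reidemeister-type moves of subsection \ref{ssec:moves} happens, localized in a small ball $B\subset N^5$. Away from the $s_i$, the isotopy extension theorem shows that $N^5\setminus f_s(M^3)$ stays the same up to diffeomorphism. So it is enough to show that a triple point move, a cross-cap move, and a hyperbolic self-tangency move each leave $\pi_1$ of the complement unchanged. Write $K_\pm=f_{s_i\pm\varepsilon}(M^3)$. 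Both $K_+$ and $K_-$ agree with a common set outside $B$. Van Kampen applied to $N^5\setminus K_\pm=(N^5\setminus(K_\pm\cup \mathrm{int}B))\cup(B\setminus K_\pm)$ then reduces the claim to a local statement. We need $\pi_1(B\setminus K_-)\cong\pi_1(B\setminus K_+)$ by an isomorphism compatible with the maps induced from $\pi_1(\partial B\setminus K)$, where $\partial B\cap K_-=\partial B\cap K_+$.

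For the local statement we use the normal forms. In each case $B\cap K_\pm$ is a union of a few properly embedded or immersed $3$-disks in $B^5$, i.e. a local link of codimension $2$. We compute $\pi_1(B\setminus K_\pm)$ by a Wirtinger-type presentation. Generators are meridians of the sheets. Relations come from the double-point surfaces, where two sheets cross transversally and the meridians commute, and from the way these surfaces pass through each other.

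\textbf{Triple point move.} Three sheets meet pairwise in double surfaces. In the local model the sheets can be taken to be linear $3$-planes in general position in $\R^5$, shifted by the parameter. Before and after the move the complement is homotopy equivalent to the complement of three affine $3$-planes in general position. This is the same configuration up to translation, so its $\pi_1$ is $\Z^3$ either way. The boundary maps are compatible, because the translation is an isotopy supported near $B$ when we restrict to $\partial B$.

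\textbf{Cross-cap move.} The cross-cap of $F$ at the critical time is the real suspension of the complex Whitney umbrella. Near it, $f_{s}$ for $s<s_i$ and $s>s_i$ are the slices of $F$ over the two sides, and by the complex structure these are related by the monodromy. Concretely, the slices $\mathrm{Re}$ of the parameter $=\pm\varepsilon$ are isotopic through $\arg$ of the parameter, since the complex parameter circle around $0$ avoids the singular value. So $K_-$ and $K_+$ are isotopic inside $B$ relative to $\partial B$ up to a boundary twist, and the local groups agree; alternatively one computes directly that both are $\Z$. Hyperbolic self-tangency is handled the same way as the cross-cap. Its local model is a complex-analytic Morse-type tangency $w=z_1^2-z_2^2+s$ between two sheets, and the family of double-point surfaces $\{z_1^2-z_2^2=-s\}$ is a smooth complex curve for both signs of real $s$, related by rotating $s$ in $\C^*$. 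This gives an isotopy of the local configuration, so both sides give the group generated by two commuting meridians.

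\textbf{Main obstacle.} The hardest step is making the "rotate the parameter in $\C^*$" isotopy honest. It must be relative to (or compatible with) $\partial B$, so that the van Kampen gluing matches. We must also verify that it genuinely fails in the elliptic case. There the double-point set changes topology, from empty to a sphere $S^2$, and an extra commutation relation appears. I would first try to write explicit suspension coordinates in which the slice $\{\mathrm{Re}\,t=\pm\varepsilon\}$ is seen to be a fibre of a locally trivial fibration over a path in $\C^*$. Then I would check that on $\partial B$ the trivialization can be chosen to be the identity.
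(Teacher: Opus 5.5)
Your global reduction is the same as the paper's: one instability at a time, then van Kampen over $\partial B$, so that everything hinges on the maps $j^\pm_\#\colon\pi_1(\partial B\setminus K_\pm)\to\pi_1(B\setminus K_\pm)$. The gap is in the local step. An abstract isomorphism $\pi_1(B\setminus K_-)\cong\pi_1(B\setminus K_+)$ is not enough for van Kampen. It must intertwine $j^-_\#$ and $j^+_\#$ under the natural identification of $\partial B\setminus K_-$ with $\partial B\setminus K_+$, and none of your three arguments provides this.

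The affine map relating the two triple-point configurations moves $\partial B$, and so does the rotation of the complex parameter through $\C^*$. The repair you propose in your last paragraph, a trivialization equal to the identity on $\partial B$, cannot exist. No homeomorphism of pairs $(B,K_-)\to(B,K_+)$ extends the natural boundary identification, because such a homeomorphism would preserve the sheets, the double-point locus and its endpoints on $\partial B$. The double-point locus fails to match in each case:
\begin{itemize}
\item In the hyperbolic move, the double curve in the first sheet is the hyperbola $x_1^2-x_2^2=-s$. Its two branches pair the four endpoints on $\partial B$ differently for $s=-1$ and $s=1$.
\item In the triple point move, the double lines $\{(x_1,s,0)\}$ and $\{(0,0,x_3)\}$ in the first sheet pass through each other. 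This gives the two distinct rational tangles $[\pm1]$ rel boundary.
\item In the cross-cap move, the source double arcs $u=\pm\sqrt{r+is}$, $r\in\R$, likewise reconnect their endpoints as $s$ changes sign.
\end{itemize}
So there is a genuine boundary monodromy (compare remark \ref{rmk:monodromy}), and no geometric isotopy can get around it.

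The missing idea is algebraic. Suppose $\pi_1(B\setminus K_\pm)$ is free abelian on the meridians of the sheets. Then $j^\pm_\#$ factors through $H_1(\partial B\setminus K)$, which is free abelian on the meridians of the components of $K\cap\partial B$. The natural identification preserves these meridians. Hence sending meridians to meridians gives an isomorphism $\pi_1(B\setminus K_-)\to\pi_1(B\setminus K_+)$ that automatically intertwines $j^-_\#$ and $j^+_\#$, whatever the monodromy does. The paper proves more: $\pi_1(\partial U\setminus M_{\pm1})$ and $\pi_1(\ol U\setminus M_{\pm1})$ are both $\Z^k$ on meridians and $j^\pm_\#$ is an isomorphism, so both knot groups are isomorphic to $G$.

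Moreover, this abelianness is the real content of the theorem, and you only assert it (``one computes directly'', and a Wirtinger sketch whose relations are never worked out). The paper derives it from lemma \ref{lemma:cplxmove}: $\ol U\setminus M_{\pm1}\approx(D^4\setminus\gamma(\gamma^{-1}(D^4)))\times[0,1]$ for a stable plane curve $\gamma$. It then computes $\pi_1(D^4\setminus\gamma)$ by cutting $\R^4$ into half-spaces in which $\gamma$ is a standard plane or two transverse planes, and applying van Kampen.
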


\begin{thm}
  \label{thm:cyc}
  If $f_s\colon M^3\to N^5,~s\in[-1,1]$ is a quasi-holomorphic homotopy which has only one elliptic self-tangency move, then there are presentations of the groups $\pi_1(N^5\setminus f_{\pm1}(M^3))$ such that their generator sets coincide and their relation sets only differ in one commutator relation: for two generators $a,b$ one of the above groups possesses the relation $ab=ba$ and the other one does not.  
\end{thm}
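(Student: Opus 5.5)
The strategy is to localise the change of knot group to a neighbourhood of the single elliptic self-tangency and compare the two sides by van Kampen. By Theorem \ref{thm:noell} the subhomotopies on $[-1,-\varepsilon]$ and $[\varepsilon,1]$ have no elliptic self-tangency moves, so they do not change the knot group up to isomorphism. We may therefore assume that the homotopy consists of exactly one elliptic self-tangency move at $s=0$. We may also assume it is supported in a small ball $B^5\subset N^5$, using the local model of the move from subsection \ref{ssec:moves}. The claim then reduces to comparing $\pi_1(N\setminus f_{-\varepsilon}(M))$ with $\pi_1(N\setminus f_{\varepsilon}(M))$.

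\textbf{Step 1 (van Kampen decomposition).} Write $X_\pm=N\setminus f_{\pm\varepsilon}(M)$. Each $X_\pm$ is the union of the common exterior part $E=N\setminus(f(M)\cup B)$ and the local piece $B\setminus f_{\pm\varepsilon}(M)$, glued along $S^4\setminus f(M)\cap S^4$. In the local model the two sheets are $3$-planes in $\R^5$ meeting $S^4$ in two unlinked or linked $2$-spheres. Near the tangency their intersection is a double-point curve, which is a $1$-manifold in $M$. On one side of $s=0$ a small circle component of this curve appears or disappears, in the way a Reidemeister II move creates a bigon. So the local piece is the complement of two $3$-disks in $B^5$ meeting in a circle on one side and disjoint on the other.

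\textbf{Step 2 (local groups).} Next compute $\pi_1$ of both local pieces and of the boundary piece. The complement in $S^4$ of two disjoint unknotted $2$-spheres has free group $F(a,b)$, with $a,b$ meridians of the two sheets. The same holds for the local piece on the side where the sheets are disjoint in the ball: it is the complement of two disjoint trivial $3$-disks, so its group is $F(a,b)$ and the inclusion from the boundary is an isomorphism. On the other side, the two sheets intersect transversally in a circle, which is a $1$-dimensional self-intersection in dimension $5$. A linking circle around a point of that double curve sits in the normal $\C^1\times\C^1$-type torus, and this torus gives the relation $ab=ba$. The same picture appears for double points of transversal planes $\R^3\cap\R^3$ in $\R^5$, whose local complement is $T^2\times$ a contractible piece. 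Thus the local group there is $\la a,b\mid ab=ba\ra$, a quotient of $F(a,b)$ by exactly one commutator.

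\textbf{Step 3 (assembling).} Van Kampen then gives
$$\pi_1(X_{\text{disjoint}})=\pi_1(E)\ \text{(with }a,b\text{ as elements)},\qquad \pi_1(X_{\text{intersect}})=\pi_1(E)/\langle\!\langle [a,b]\rangle\!\rangle.$$
Choosing any presentation of $\pi_1(E)$ that contains $a,b$ among its generators, the two presentations have the same generators. Their relation sets differ only by $ab=ba$, as claimed.

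\textbf{Main obstacle.} The crux is Step 2: verifying from the explicit normal form of the elliptic move that on the intersecting side the double curve is a single circle, and that the boundary-to-local map is surjective with kernel normally generated by one commutator. One must also check that the boundary piece contributes no additional relations, i.e.\ that it has group $F(a,b)$ mapping isomorphically on the disjoint side. I would check this by deforming the local model to two $3$-planes pushed apart versus crossing, and computing via the $S^1$-fibration structure of the normal directions. The elliptic/hyperbolic distinction matters precisely here. In the hyperbolic case the double curve changes by a saddle, not by birth of a circle, and the meridian torus relation already holds on both sides, consistent with Theorem \ref{thm:noell}.
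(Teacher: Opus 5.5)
Your architecture is the paper's: reduce via Theorem \ref{thm:noell} to a single elliptic move, then apply van Kampen to $N^5=(N^5\setminus U)\cup\ol U$. On the disjoint side, both $\partial U\setminus M$ and $\ol U\setminus M$ have group $F(a,b)$ and the inclusion is an isomorphism. Your Step 3 is then correct, and is a tidier way of writing the paper's final presentations. The gap is in Step 2, which is where the theorem's real content lies: you assert, but do not prove, that on the intersecting side $\pi_1(\ol U\setminus M)\cong\Z^2$ and that the boundary inclusion induces the abelianisation $F(a,b)\to\Z^2$.

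The meridian torus around a point of the double circle only shows that some meridian of one sheet commutes with some meridian of the other. Even that needs a common base path to give $[a,b]=1$ for the boundary classes themselves. It gives neither surjectivity of $F(a,b)\to\pi_1(\ol U\setminus M)$ nor the absence of further relations, and without both the two presentations could differ by more than one commutator.

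Your proposed check, deforming to two $3$-planes ``pushed apart versus crossing'', cannot be carried out relative to $\partial U$. Transverse $3$-planes in $\R^5$ meet in a line, so they meet $\partial U$ in two $2$-spheres intersecting in two points. In the elliptic move, by contrast, $\partial U\setminus M_{\pm1}$ is literally the same space on both sides: a split pair of disjoint $2$-spheres, not ``linked'' on one side. The sheets meet in a circle in the interior, not a line.

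The paper closes this gap with an explicit computation. The model $g_1$ is invariant under rotations in the $(A,B)$-plane fixing $H=\{A=B=0\}$. In the half-slice $\{A\ge0,B=0\}$ the image is two $2$-disks crossing transversally at the single point $(1,0,0,0,0)$, each meeting $H$ in a boundary arc. Hence the complement in the half-slice retracts onto $T^2$, a Hopf link complement. Meanwhile $H$ minus the image, which is $\R^3$ minus two skew lines, retracts onto $S^1\vee S^1$ carried by the two meridians. This gives
$\ol U\setminus M_1\simeq (T^2\times S^1)\cup_{(S^1\vee S^1)\times S^1}((S^1\vee S^1)\times D^2)$.
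Van Kampen kills the rotation circle and yields $\Z^2$ generated by the two meridians, so the boundary map is $a\mapsto a'$, $b\mapsto b'$. With this computation, or an equivalent one, in place of your torus heuristic, your Steps 1 and 3 give the theorem.
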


We note that the above theorems are interesting even if $f_s,~s\in[-1,1]$ is a regular homotopy. Together they completely describe how a knot group changes under quasi-holomorphic homotopies: it only changes under elliptic self-tangency moves and all changes correspond to possibly altering the commutation of two generators. Studying these generators and the way the new relation appears yields further, weaker conditions under which quasi-holomorphic homotopies still do not change the knot group; see subsection \ref{ssec:crly}. In particular, this holds for quasi-holomorphic homotopies of the standard embedding $S^3\into S^5$ with only one elliptic self-tangency move. 


\subsection*{Organisation of the paper}

In section \ref{sec:pre} we define the notions used throughout the paper (see subsection \ref{ssec:def}), then motivate the introduction and study of quasi-holomorphic homotopies (see subsection \ref{ssec:motiv}). In section \ref{sec:form} we give formulae for the local forms of moves in quasi-holomorphic homotopies (see subsection \ref{ssec:moves}), then connect these with the complex Reidemeister moves (see subsection \ref{ssec:cplxmove}). Lastly, in section \ref{sec:pi1} we put the above main theorems in the context of knot groups (see subsection \ref{ssec:knotgrp}), prove them (see subsection \ref{ssec:prfmain}), then state some of their corollaries (see subsection \ref{ssec:crly}).

\subsection*{Acknowledgements}

I thank Gergő Pintér, Tamás Terpai, András Némethi and András Szűcs for valuable consultations on the topic of this paper. In particular, Pintér suggested the unicity part in the proof of proposition \ref{prop:qholformula}; and Terpai drew my attention to the monodromy (non-)action explained in remark \ref{rmk:monodromy}.

\section{Preliminaries}
\label{sec:pre}

\subsection{Definitions}
\label{ssec:def}

Throughout the paper all manifolds are assumed to be smooth and connected. We consider stable immersions of $3$-manifolds into $5$-manifolds and of $4$-manifolds into $6$-manifolds. A stable immersion $f\colon M^3\imto N^5$ has simple regular points almost everywhere and transverse double regular points along a curve $D(f)\subset M^3$ mapped to a curve $\widetilde D(f)\subset N^5$ by the double covering $f|_{D(f)}$. A stable immersion $F\colon P^4\imto Q^6$ has simple regular points almost everywhere, transverse double regular points along a surface $D(F)\subset P^4$ mapped to a surface $\widetilde D(F)\subset Q^6$ by the double covering $F|_{D(F)}$, and transverse triple points in a discrete set of points $T(F)\subset P^4$ in the closure of $D(F)$ in the source and $\widetilde T(F)\subset Q^6$ in the closure of $\widetilde D(F)$ in the target.

\begin{defi}
  \label{defi:qholhom}
  Two stable immersions $f_0,f_1\colon M^3\imto N^5$ will be called \textit{quasi-holomorphically homotopic} if there is a homotopy $f_s\colon M^3\to N^5,~s\in[0,1]$ connecting them such that the map $F\colon M^3\times[0,1]\to N^5\times[0,1],F(x,s):=(f_s(x),s)$ is a stable quasi-holomorphic map in the sense of \cite{qhol}, i.e. it is a stable immersion outside a discrete set of points $C(F)\subset M^3\times[0,1]$ and for each $p\in C(F)$ there are neighbourhoods $p\in U\subset M^3\times[0,1]$ and $F(p)\in V\subset N^5\times[0,1]$ and diffeomorphisms $U\approx\C^2$ and $V\approx\C^3$ such that $F|_U$ is the cross-cap $(u,v)\mapsto(u^2,uv,v)$ in these coordinates. In this case $f_s,~s\in[0,1]$ will be called a \textit{quasi-holomorphic homotopy}.
\end{defi}

\begin{rmk}
  For a stable quasi-holomorphic map $F$ as above, the union $D(F)\cup T(F)\cup C(F)$ is the closure of the surface $D(F)$ and the union $\widetilde D(F)\cup\widetilde T(F)\cup\widetilde C(F)$ (where $\widetilde C(F)=F(C(F))$) is the closure of the surface $\widetilde D(F)$. The restrictions $D(F)\to\widetilde D(F)$, $T(F)\to\widetilde T(F)$ and $C(F)\to\widetilde C(F)$ of $F$ are a double cover, a triple cover and a bijection respectively.
\end{rmk}

For a quasi-holomorphic homotopy $f_s\colon M^3\to N^5,~s\in[0,1]$ there is a finite number of para\-meters $s\in[0,1]$ such that the map $f_s\colon M^3\to N^5$ is not a stable immersion. We may assume that at each such exceptional parameter there is exactly one point of instability in $N^5\times[0,1]$. Then at each exceptional parameter one of the following three instabilities occurs: \textit{cross-cap move} where $F$ 
has a cross-cap point in $N^5\times\{s\}$; \textit{triple point move} where $F$ has a triple point in $N^5\times\{s\}$; and \textit{self-tangency move} where the double point surface $\widetilde D(F)$ of $F$ in $N^5\times[0,1]$ is not transverse to $N^5\times\{s\}$. Note that the restriction of the function $N^5\times[0,1]\to[0,1]$, mapping $N\times\{s\}$ to $s$, to the surface $\widetilde D(F)$ is a Morse function. Self-tangency moves occur precisely at the critical points of this function. Hence such a move corresponds to a handle attachment to the double point surface $\widetilde D(F)$ when moving from $0$ to $1$, which can be of Morse index $0$, $1$ or $2$. A self-tangency move of index $0$ or $2$ will be called \textit{elliptic}; an index-$1$ self-tangency move will be called \textit{hyperbolic}. These namings are consistent with the ones in \cite{3knots} where regular homotopies of immersions $S^3\imto S^5$ were studied. The above moves are similar to the Reidemeister moves occurring in the diagrams of knots $S^1\into S^3$ under isotopies. In fact, the cross-cap, triple point and hyperbolic self-tangency moves exactly correspond to complex Reidemeister moves; see remark \ref{rmk:mnb}.

\subsection{Motivations}
\label{ssec:motiv}

A (generic) regular homotopy of two immersions $M^3\imto N^5$ is naturally a quasi-holomorphic homotopy without cross-cap moves. The knottedness properties of $2$-codimensional immersions were analysed by many. Classically, an \textit{$n$-knot} is an embedding $S^n\into S^{n+2}$ up to isotopy (see e.g. \cite{kerv}); changing ``embedding'' to ``stable immersion'' and ``isotopy'' to ``homotopy through stable immersions'' is a natural generalisation of this (see e.g. \cite{3knots}). The reason for this is the following proposition and remark \ref{rmk:isot} related to it.

\begin{prop}
  \label{prop:isot}
  If $f_s\colon M^n\imto N^{n+2},~s\in[0,1]$ is a homotopy such that for all $s\in[0,1]$ the map $f_s$ is a stable immersion, then there is a diffeotopy $h_s\colon N^{n+2}\to N^{n+2},~s\in[0,1]$ of the identity of $N^{n+2}$ such that for all $s\in[0,1]$ we have $h_s\circ f_s=f_0$.
\end{prop}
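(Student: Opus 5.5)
The plan is to prove the equivalent statement for $k_s:=h_s^{-1}$, namely $k_s\circ f_0=f_s$, by integrating a time-dependent vector field on $N^{n+2}$, in the spirit of the Thom isotopy extension lemma. Differentiating $k_s\circ f_0=f_s$ in $s$ shows that $k_s$ is the flow of a field $W_s$ satisfying
$$W_s\bigl(f_s(x)\bigr)=\partial_s f_s(x)\qquad\text{for all }x\in M^n,\ s\in[0,1].$$
Conversely, if a smooth compactly supported family $W_s$ with this property exists, then its flow $k_s$ is a diffeotopy of the identity. Moreover, $s\mapsto k_s(f_0(x))$ and $s\mapsto f_s(x)$ are then integral curves of the same time-dependent field with the same initial point, so they coincide. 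Setting $h_s=k_s^{-1}$ gives the statement. So everything reduces to constructing $W_s$, and we may assume $M^n$ is compact, or that the homotopy is compactly supported, so that the flow exists for all $s\in[0,1]$.

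The construction of $W_s$ is local and then glued. We work on $N^{n+2}\times[0,1]$ with the track $\Phi(x,s)=(f_s(x),s)$, which is an immersion whose image is the union $\bigcup_s f_s(M)\times\{s\}$. We look for a vector field on $N\times[0,1]$ of the form $\partial_s+W_s$ that is tangent to this image and restricts on it to $\Phi_*\partial_s$. We treat three kinds of points. Near a point $(f_s(x),s)$ with $x$ a simple point, the image is locally an embedded submanifold. There the required vector is $\Phi_*\partial_s$, which we extend to a neighbourhood using a tubular neighbourhood. Away from the image we set $W_s=0$. At double points, where the two sheets $f_s(U)$ and $f_s(U')$ meet transversally along the curve $\widetilde D(f_s)$, we choose local coordinates on $N\times[0,1]$ in which the two sheets are coordinate subspaces. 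This is possible by stability, since the pair of sheets is a trivial family of transverse linear subspaces. We then prescribe $W_s$ as the sum of the two extensions, one constant along each sheet's normal direction. Finally, we glue all the local fields with a partition of unity on $N\times[0,1]$. This preserves the constraint, because the constraint is affine, that is, a convex combination of admissible fields is admissible.

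The main obstacle is the double point step. For $W_s$ to be well defined at $y=f_s(x)=f_s(x')$, the two velocities $\partial_s f_s(x)$ and $\partial_s f_s(x')$ must agree. I will examine this condition first. By stability of every $f_s$, the double point set $D(f_s)\subset M$ and the pairing $x\leftrightarrow x'$ vary smoothly in $s$. The identity $f_s(x(s))=f_s(x'(s))$, differentiated in $s$, shows that the two velocities differ by $df_s(\dot x)-df_s(\dot x')$. This difference vanishes exactly when the double point curve is stationary in the source. Because $h_s\circ f_s=f_0$ forces $D(f_s)=D(f_0)$ with the same pairing, this stationarity is also necessary for the conclusion. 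So the key lemma to prove is that, along the given homotopy, the paired double points satisfy $\dot x=\dot x'=0$, i.e.\ $D(f_s)$ is constant. I would try to derive this from the stability hypothesis in the form used in the paper. If it only holds after adjusting the parametrisation of $M$ near $D(f_s)$, then this is precisely where the argument has to be supplemented. Once the velocities agree along the double curve, the transversality of the sheets makes the local extension and the partition of unity gluing routine.
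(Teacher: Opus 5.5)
The gap is at the double point step you flag, and it cannot be closed the way you propose: stability of every $f_s$ does \emph{not} make the paired double points stationary, so your ``key lemma'' is false. Take $x\in D(f_0)$ with partner $x'$, a small ball $B\ni x$ with $x'\notin B$ and $f_0|_B$ injective, and a diffeotopy $\phi_s$ of $M^n$ supported in $B$ with $\phi_1(x)\neq x$. Put $f_s:=f_0\circ\phi_s$. Every $f_s$ is a stable immersion, since it has the same image and is right-equivalent to $f_0$, yet its double point pairs move. Your own necessity remark then shows the conclusion fails. Evaluating $h_1\circ f_1=f_0$ at $x$ and at $x'$ gives $h_1(f_0(\phi_1(x)))=f_0(x)=f_0(x')=h_1(f_0(x'))$. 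Injectivity of $h_1$ then gives $f_0(\phi_1(x))=f_0(x)$, i.e.\ $\phi_1(x)=x$, a contradiction. So when $f_0$ has double points, the statement can only hold in weaker forms. One is up to a source reparametrisation: $h_s\circ f_s\circ\phi_s=f_0$ for some diffeotopy $\phi_s$ of $M^n$. The other is at the level of images: $h_s(f_s(M^n))=f_0(M^n)$. The image version is all the paper needs for knot groups, and it is what the paper's proof actually delivers. That proof applies isotopy extension recursively to the images $\widetilde M_m(f_s)$ of the multiple point strata, so it only ever matches sets and never meets your velocity-matching problem.

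Your vector-field method does work once this freedom is built in, in either of two ways.
(a) First reparametrise the source. Lift a smooth family of diffeomorphisms $\widetilde D(f_0)\to\widetilde D(f_s)$ through the double covers to pairing-equivariant diffeomorphisms $D(f_0)\to D(f_s)$. Extend these to a diffeotopy $\phi_s$ of $M^n$ by isotopy extension in the source. Then apply your construction to $f_s\circ\phi_s$, whose double point pairs are stationary, so the velocities at paired points agree.
(b) Replace the pointwise constraint by the tangency constraint $W_s(f_s(x))\in\partial_sf_s(x)+df_s(T_xM)$. This constraint is still affine, so your partition of unity step survives. At $y=f_s(x)=f_s(x')$ it is solvable because transversality gives $df_s(T_xM)+df_s(T_{x'}M)=T_yN$. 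The resulting flow preserves the images; this is Thom's first isotopy lemma in a simple setting.

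In either version you must also handle transverse $m$-tuple points with $m\ge3$, which occur for $n\ge4$ and which you omit; the same linear algebra works there. You should also correct the local formula at a double point. In coordinates where the sheets are $\{a=0\}$ and $\{b=0\}$, with prescribed fields $v_1(b,c)$ and $v_2(a,c)$ agreeing as $v(c)$ on the intersection, the plain sum of the two extensions restricts to the wrong field on each sheet. You need $v_1(b,c)+v_2(a,c)-v(c)$.
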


\begin{rmk}
  \label{rmk:isot}
  If $f_0$ or $f_1$ above is an embedding, then $f_s,~s\in[0,1]$ is an isotopy. Indeed, if for some parameter $s$ the map $f_s$ had multiple points, then there would also be a parameter $s'$ such that the map $f_{s'}$ has non-transverse self-intersection. Hence $f_{s'}$ would not be stable.
\end{rmk}

\begin{prf}[of proposition \ref{prop:isot}]
  A stable immersion $f\colon M^n\imto N^{n+2}$ has $m$-tuple points in an $(n+2-2m)$-dimensional submanifold $\widetilde M_m(f)\subset N^{n+2}$ where the branches of $f$ meet transversely. To obtain the desired diffeotopy $h_s,~s\in[0,1]$, we apply the isotopy extension theorem recursively with respect to the dimension of the multiple point strata $\widetilde M_m(f_s)$.

  If $m$ is the largest number for which $\widetilde M_m(f_s)$ is non-empty, then the submanifolds $\widetilde M_m(f_s),~s\in[0,1]$ form the path of an isotopy between the embeddings $\widetilde M_m(f_0)\subset N^{n+2}$ and $\widetilde M_m(f_1)\subset N^{n+2}$. Hence the isotopy extension theorem yields a diffeotopy $h_s^m,~s\in[0,1]$ of the identity of $N^{n+2}$ for which we have $\widetilde M_m(h_s^m\circ f_s)=\widetilde M_m(f_0)$ for all $s$. Now the restriction of $h_s^m\circ f_s$ to the preimage of the complement of a tubular neighbourhood $T\subset N^{n+2}$ of $\widetilde M_m(f_0)$ is a stable immersion between manifolds with boundaries mapping boundary to boundary, and such that it has no $m$-tuple points. Now we can apply the above method for the stratum of $(m-1)$-tuple points to obtain an analogous diffeotopy $h_s^{m-1},~s\in[0,1]$ of the identity of $N^{n+2}\setminus T$ straightening the $(m-1)$-tuple point stratum. We trivially extend this to a diffeotopy of the whole manifold $N^{n+2}$, which we will still denote by $h_s^{m-1}$. Then we have
  $$\widetilde M_m(h_s^{m-1}\circ h_s^m\circ f_s)\cup\widetilde M_{m-1}(h_s^{m-1}\circ h_s^m\circ f_s)=\widetilde M_m(f_0)\cup\widetilde M_{m-1}(f_0)$$
  for all $s$. Continuing this method recursively we obtain the diffeotopy $h_s:=h_s^0\circ\ldots\circ h_s^m$.
\end{prf}

For fixed manifolds $M^3$ and $N^5$, paths in the space of stable immersions $M^3\imto N^5$ are homotopies through stable immersions. If we extend this space to the space of all immersions, then the paths will be the regular homotopies; many properties of these in the case $M^3=S^3,N^5=S^5$ were described by Ekholm \cite{3knots}. If we also allow maps $M^3\to N^5$ with isolated singularities corresponding to cross-cap moves, then we obtain quasi-holomorphic homotopies.

Quasi-holomorphic homotopies also appear as parts of stable quasi-holomorphic cobordisms, which were studied in \cite{qhol}. In fact, the following remark shows that these are the parts of cobordisms yet to be understood.

\begin{rmk}
  \label{rmk:cob}
  Suppose that the immersions $f_0\colon M^3_0\imto N^5$ and $f_1\colon M^3_1\imto N^5$ are stably quasi-holomorphically cobordant through a map $F\colon P^4\to N^5\times[0,1]$. Then for almost all parameters $s\in[0,1]$, the slice $f_s:=F|_{M_s}$, where $M_s:=F^{-1}(N^5\times\{s\})$, is a stable immersion of a $3$-manifold into $N^5$. For all exceptional parameters $s$, we can assume that exactly one of the following two options occurs: $M_s$ is a $3$-manifold but the map $f_s$ is not stable; or $M_s$ is a manifold outside of a single point and the restriction of $f_s$ to the manifold part is stable. Then there are small numbers $\varepsilon_s>0$ such that in the first case $f_r,~r\in[s-\varepsilon_s,s+\varepsilon_s]$ is a quasi-holomorphic homotopy; and in the second case, the manifold $M_{s+\varepsilon_s}$ is obtained from $M_{s-\varepsilon_s}$ by a Morse type handle attachment. The local forms of the changes that occur in the second case are moves that occur in embedded Morse theory; they were thoroughly described in \cite{borpow}. The local forms in the first case will be described in section \ref{sec:form}.
\end{rmk}

A natural occurrence of these objects is given by finitely $\AA$-determined holomorphic map germs $\Phi\colon(\C^2,0)\to(\C^3,0)$, i.e. germs that are stable immersions off the origin. The topology of such a germ only depends on its \textit{link immersion}, that is, its restriction $f\colon S^3\imto S^5$ to the preimage of a sufficiently small sphere around the origin (which is a sphere itself); see e.g. \cite{associmm}. The connection of these map germs with quasi-holomorphic homotopies is the following. 

\begin{prop}
  \label{prop:link}
  If $\Phi\colon(\C^2,0)\to(\C^3,0)$ is a finitely $\AA$-determined holomorphic map germ, then the link immersion $f\colon S^3\imto S^5$ of $\Phi$ is stably quasi-holomorphically cobordant 
  to the standard embedding $S^3\subset S^5$.
\end{prop}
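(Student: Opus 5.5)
We will construct the cobordism from a stabilization (generic linear deformation) of the germ $\Phi$. Since $\Phi$ is finitely $\AA$-determined, by Mather--Gaffney it has a stabilization $\Phi_t\colon(\C^2,0)\to(\C^3,0)$, $t\in\C$, with $\Phi_0=\Phi$. Concretely we may take $\Phi_t=\Phi+t\Lambda$ for a generic linear map $\Lambda\colon\C^2\to\C^3$. For $t\neq0$ small, $\Phi_t$ restricted to a small closed ball $B\subset\C^2$ (the preimage of a Milnor ball $B_\varepsilon\subset\C^3$) is a stable holomorphic map. Its only singularities are therefore isolated cross-caps and triple points, with transverse double points elsewhere. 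As a smooth map $B^4\to B^6_\varepsilon$ it is thus a stable quasi-holomorphic map in the sense of \cite{qhol}. Its boundary restriction is isotopic through stable immersions to the link immersion $f$, because finite determinacy gives the conic structure and transversality of $\Phi_t$ to the sphere $\partial B_\varepsilon$ for small $|t|$. By proposition \ref{prop:isot} we may regard this boundary restriction as $f$ itself.

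Next we turn $\Phi_t|_B$ into a cobordism in $S^5\times[0,1]$ between $f$ and the standard embedding. Choose a point $q\in B_\varepsilon$ not in the image of $\Phi_t$ near the boundary, or simply a small ball $B_\delta$ around a generic point. After radial reparametrisation, $B^6_\varepsilon\setminus\mathrm{int}\,B_\delta$ becomes $S^5\times[0,1]$. A cleaner route is to use a generic linear map $L$ that is an embedding: for instance, compose with a path from $\Phi_t$ to $\Lambda$ via $\Phi_{t}^{\lambda}=\lambda\Phi+t\Lambda$ for large $|t|$. The link of $\Lambda$ is the standard embedding $S^3\subset S^5$. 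We then take the source region $\{x: r_1\le|\Phi_t(x)|\le r_2\}$ with $r_1$ small and $r_2$ large, where on the inner sphere the map looks like the link of $\Phi$, and on the outer sphere the linear term dominates so the link is standard. This region is a $4$-manifold $P^4$, and $F=\Phi_t|_{P}$, composed with $B_{r_2}\setminus B_{r_1}\cong S^5\times[r_1,r_2]$, maps $\partial P$ into the two boundary components. One should double-check that $\Lambda$ dominates at large radius. Since $\Phi$ is polynomial up to finite determinacy we may replace $\Phi$ by a polynomial, so instead we use the family $\Phi+t\Lambda$ restricted to $|x|$ small compared with $t$. On the sphere $|x|=\rho$ with $|t|\gg\rho^{k-1}$, where $\Phi=O(|x|^2)$ because $\Phi$ is singular at $0$, the map is a small perturbation of the embedding $t\Lambda$.

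So the key steps are as follows. First, choose the stable perturbation $\Phi_t=\Phi+t\Lambda$ and fix $t$. Second, pick radii $\rho_1<\rho_2$: on $|x|=\rho_2$ the map is a small perturbation of $t\Lambda$, hence its restriction is isotopic to the standard embedding; on the other end the restriction is isotopic to $f$. Third, take $P$ to be the shell in the source and $F=\Phi_t|_P$, composed with a collar identification of the target shell with $S^5\times[0,1]$. Then generically perturb $F$ so that the height function restricted to the relevant strata is Morse and $F$ is transverse to the level spheres near the boundary.

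The main obstacle is the inner end: showing that the restriction of $\Phi_t$ to a sphere of the form $\Phi_t^{-1}(S_{r_1})$ is a sphere whose immersion is isotopic to $f$. This requires $|t|$ small relative to $r_1$, which conflicts with the outer end needing $|t|$ large relative to $r_2$. The resolution is to use radii in the target, with $|t|\ll r_1\ll r_2$. This allows the target to reach a scale where the linear term dominates because $\Phi(x)/|x|\to0$. We must also ensure that $|\Phi_t|$ is transverse to the spheres in between, or else accept Morse-type critical levels, which are exactly allowed in a cobordism. The final step is to verify that all singularities of $F$ are stable holomorphic ones (cross-caps, triple points), so that $F$ is stable quasi-holomorphic.
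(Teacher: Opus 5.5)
\textbf{There is a genuine gap at the end that is supposed to give the standard embedding.} Your comparison of $\Phi$ with $t\Lambda$ is reversed. The limit $\Phi(x)/|x|\to0$ holds as $x\to0$, so $t\Lambda$ dominates only very close to the origin. That is exactly what your own estimate $|t|\gg\rho^{k-1}$ says: target spheres whose radius is small compared with a power of $|t|$. On target spheres with $|t|\ll r$, the term $\Phi$ dominates instead.

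Hence with $|t|\ll r_1\ll r_2$ both boundary spheres lie in the $\Phi$-dominated regime, and both ends are isotopic to $f$. Your ``Second'' step, which puts the standard end on the larger sphere $|x|=\rho_2$, has the same reversal. In any case there is no large-radius regime available, since $\Phi$ is only a germ. Also, $\Phi=O(|x|^2)$ is false for corank-$1$ germs such as the cross-cap, though injectivity of $d\Phi_0+t\Lambda$ would suffice.

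Even after swapping the ends, so that the inner sphere is centred at $\Phi_t(0)=0$, you still need $0$ to be a \emph{simple} point of the image of $\Phi_t$, i.e.\ $\Phi_t^{-1}(0)=\{0\}$ locally. If $0$ lay on the double point curve of $\Phi_t$, the preimage of a small ball around it would be two $4$-balls. The inner end would then be a Hopf link of two $3$-spheres, not the standard $S^3\subset S^5$. You do not address this.

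\textbf{The missing step is the one you mention and then drop, with the right choice of point.} As in the paper, take any simple regular point $q$ of the image of $\Phi_t$ (one preimage, where $\Phi_t$ is immersive). Remove a small ball $B_\delta(q)$ from the Milnor ball $B_\varepsilon$. A point off the image, as in your first suggestion, would instead give a cobordism to the empty immersion.

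The preimage of $B_\delta(q)$ is a small $4$-ball on which $\Phi_t$ is an embedded disc. So the inner boundary is the standard embedding and the outer boundary is $f$. The restriction of $\Phi_t$ to $\Phi_t^{-1}(B_\varepsilon\setminus\mathrm{int}\,B_\delta(q))\approx S^3\times[0,1]$ maps into $B_\varepsilon\setminus\mathrm{int}\,B_\delta(q)\approx S^5\times[0,1]$. It is already stable quasi-holomorphic, because a stable holomorphic map $\C^2\to\C^3$ has only cross-caps and transverse double and triple points.

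No further perturbation is needed. A generic smooth perturbation, as in your third step, would actually destroy the complex cross-caps, since generic smooth maps $\R^4\to\R^6$ are singular along curves rather than at isolated points.
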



\begin{prf}
  Let $\{\Phi_t\}_{t\in\C}$ be a stable deformation of $\Phi$, i.e. a family of holomorphic maps $\C^2\to\C^3$ such that we have $\Phi_0=\Phi$ and for all $t\ne0$ the maps $\Phi_t$ are stable and pairwise left-right equivalent. This exists and is essentially unique; see e.g. \cite{wallfin}. We may also assume (up to diffeomorphisms) that for some parameter $t\in\C$, the preimage of the closed unit ball $D^6\subset\C^3$ under $\Phi_t$ is the unit ball $D^4\subset\C^2$, and the restriction $\Phi_t|_{\C^2\setminus D^4}$ is just the cylinder over its restriction to $S^3=\partial D^4$. Then the restriction of $\Phi_t|_{D^4}\colon D^4\to D^6$ to the boundary is $f\colon S^3\imto S^5$.

  We can suppose that the origin in $D^6$ is a simple regular value of the map $\Phi_t$ and the preimage under $\Phi_t$ of the open $\varepsilon$-ball $B^6_\varepsilon\subset\C^3$ is the open $\varepsilon$-ball $B^4_\varepsilon\subset\C^2$. Moreover, we can also assume that the restriction $\Phi_t|_{B^4_\varepsilon}$ is the standard embedding $B^4_\varepsilon\subset B^6_\varepsilon$. 
  Then the map
  $$F:=\Phi_t|_{\Phi_t^{-1}(D^6\setminus B^6_\varepsilon)}\colon D^4\setminus B^4_\varepsilon\approx S^3\times[0,1]\to D^6\setminus B^6_\varepsilon\approx S^5\times[0,1]$$
  is a stable quasi-holomorphic 
 cobordism between the standard embedding and the immersion $f$.
\end{prf}

\begin{rmk}
  Actually, the cobordism constructed above is a stable quasi-holomorphic \textit{concordance}, where for two immersions $f_0,f_1\colon M^3\imto N^5$ this relation is defined by the existence of a stable quasi-holomorphic map $F\colon M^3\times[0,1]\to N^5\times[0,1]$ such that for $i=0,1$ the map $F|_{M^3\times\{i\}}\colon M^3\times\{i\}\to N^5\times\{i\}$ is $f_i$ (i.e. a cobordism through the cylinder over $M^3$). Naturally, the equivalence classes given by quasi-holomorphic homotopy form a finer classification than those given by stable quasi-holomorphic concordance, which, in turn, form a finer classification than those given by stable quasi-holomorphic cobordism. It is an interesting problem to determine how far apart these classifications are from each other. 
\end{rmk}

\begin{rmk}
  In \cite{sphsl} we define \textit{sphere-slicings} of a holomorphic map germ $\Phi\colon(\C^2,0)\to(\C^3,0)$ by fixing a stabilisation $\Phi_t$ and a diffeomorphism $D^6\setminus B^6_\varepsilon\approx S^5\times[0,1]$ (using the notation in the proof of proposition \ref{prop:link}), and considering the restrictions $f_s\colon M_s\to S^5\times\{s\}$ of the map $F$ to the preimages of $S^5\times\{s\}$ under this identification. In other words, we slice the image of a stable deformation of $\Phi$ by a family of $5$-spheres. It turns out that sphere-slicings can be used to identify several invariants of map germs $\Phi$ through the changes of this map when moving between the different slices. Since locally the above constructed concordance is either a quasi-holomorphic homotopy, or an embedded handle attachment, according to remark \ref{rmk:cob}, the local changes of these maps are completely described by \cite{borpow} and the present paper.
\end{rmk}

\section{Local formulae}
\label{sec:form}

\subsection{Moves in a quasi-holomorphic homotopy}
\label{ssec:moves}

The local forms of the Reidemeister type moves appearing along a quasi-holomorphic homotopy are described by the following.

\begin{prop}
  \label{prop:qholformula}
  Let $F\colon M^3\times\R\to N^5\times\R$ be a stable quasi-holomorphic map of the form $(x,s)\mapsto(f_s(x),s)$ for $x\in M^3,s\in\R$, and let $s_0\in\R$ be a parameter such that $f_{s_0}$ has a point of instability $p\in N^5$. Then there is a coordinate neighbourhood $U\subset N^5$ of $p$ such that for a sufficiently small $\varepsilon>0$, for all $s\in[s_0-\varepsilon,s_0+\varepsilon]$ the map $f_s|_{f_s^{-1}(U)}$ is $g_{(s-s_0)/\varepsilon}$ where $g_s,~s\in[-1,1]$ is one of the following:
  \begin{enumerate}[label=\rm{(\roman*)}]
  \item\label{formcc} normal form of the cross-cap move:
    \begin{alignat*}2
      g_s\colon\R^3&\to\R^5,\\
      (x_1,x_2,x_3)&\mapsto(x_1^2-x_2^2+x_3,x_1x_3+2x_1x_2^2-sx_2,x_2x_3-2x_1^2x_2+sx_1,x_3,s-2x_1x_2);
    \end{alignat*}
  \item\label{formtp} normal form of the triple point move: 
    \begin{alignat*}2
      g_s\colon\R^3\sqcup\R^3\sqcup\R^3&\to\R^5,\\
      (x_1,x_2,x_3)&\mapsto(x_1,x_2,x_3,0,0),\\
      (y_1,y_2,y_3)&\mapsto(y_1,s,0,y_2,y_3),\\
      (z_1,z_2,z_3)&\mapsto(0,z_2,z_1,z_2,z_3);
    \end{alignat*}
  \item\label{form0} normal form of the index-$0$ self-tangency move: 
    \begin{alignat*}2
      g_s\colon\R^3\sqcup\R^3&\to\R^5,\\
      (x_1,x_2,x_3)&\mapsto(x_1,x_2,0,x_3,0),\\
      (y_1,y_2,y_3)&\mapsto(y_1,y_2,y_1^2+y_2^2-s,0,y_3);
    \end{alignat*}
  \item\label{form1} normal form of the index-$1$ self-tangency move:
    \begin{alignat*}2
      g_s\colon\R^3\sqcup\R^3&\to\R^5,\\
      (x_1,x_2,x_3)&\mapsto(x_1,x_2,0,x_3,0),\\
      (y_1,y_2,y_3)&\mapsto(y_1,y_2,y_1^2-y_2^2+s,0,y_3);
    \end{alignat*}
  \item\label{form2} normal form of the index-$2$ self-tangency move: 
    \begin{alignat*}2
      g_s\colon\R^3\sqcup\R^3&\to\R^5,\\
      (x_1,x_2,x_3)&\mapsto(x_1,x_2,0,x_3,0),\\
      (y_1,y_2,y_3)&\mapsto(y_1,y_2,y_1^2+y_2^2+s,0,y_3).
    \end{alignat*}
  \end{enumerate}
\end{prop}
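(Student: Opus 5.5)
The plan is to split according to the three kinds of instability listed in subsection \ref{ssec:def}: a cross-cap point of $F$, a triple point of $F$, or a tangency of $\widetilde D(F)$ with the slice $N^5\times\{s_0\}$. In each case I first bring $F$ near the relevant point into a normal form that respects the projection to the $s$-coordinate, and then read off the slices $f_s$. The one structural requirement is that the target coordinates be chosen so that the last real coordinate of $N^5\times\R$ stays $s$, i.e. changes of coordinates must be fibred over the $s$-line. Any remaining mismatch in the parameter is absorbed by the rescaling $s\mapsto(s-s_0)/\varepsilon$.

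\emph{Self-tangency and triple point cases.} Near a point of $\widetilde D(F)$ the map $F$ consists of two transverse immersed $4$-dimensional branches in $N^5\times\R$. I straighten the first branch to $\R^3\times\R\subset\R^5\times\R$ by a fibred diffeomorphism, which is possible because each $f_s$ is an immersion and the branch is transverse to the slices. Then I write the second branch as a graph over the complementary coordinates. Its intersection with the first branch is the surface $\widetilde D(F)$, and the height function $s|_{\widetilde D(F)}$ has a nondegenerate critical point. By the Morse lemma I change coordinates on $\widetilde D(F)$, and extend this change to the ambient space, so that the height becomes $\pm y_1^2\pm y_2^2$. This gives the forms \ref{form0}, \ref{form1} and \ref{form2}, and the index is invariant. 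The triple point move is handled the same way. There are three branches, and transversality of the triple point of $F$ lets me linearise all three simultaneously; the $s$-coordinate is a linear form that does not vanish on any pairwise intersection (this is genericity). A linear change of coordinates then puts the branches in the form \ref{formtp}.

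\emph{Cross-cap case.} By Definition \ref{defi:qholhom}, $F$ is the cross-cap $(u,v)\mapsto(u^2,uv,v)$ in some coordinates, and the function $s$ is a generic smooth function on the target $\C^3=\R^6$. I first argue that genericity forces $ds\ne0$ at the cross-cap point. Then I use the symmetry group of the cross-cap, that is, the pairs of source and target diffeomorphisms preserving it, to normalise the $1$-jet of $s$. The hope is that the tangent directions split into finitely many orbits, with the open orbit corresponding to a linear form that vanishes neither on the image of $dF$ nor on the kernel direction. The remaining task is to show that $s$ can be moved, by a symmetry, to a specific linear form while keeping the cross-cap structure; this gives the uniqueness statement. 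For this I would use a path/Moser argument: join two admissible functions $s$ and $s'$ by the linear path $s_t$, and solve for vector fields tangent to the cross-cap (liftable vector fields) that carry $s_t$ to $s_{t+dt}$. This is a finite-determinacy computation using the module of liftable vector fields of the cross-cap. Finally, I would pick explicit real coordinates on $\C^2$ and $\C^3$, writing $u=x_1+ix_2$ and so on, in which the chosen $s$ becomes the last coordinate. Expanding $u^2$ and $uv$ into real and imaginary parts then yields formula \ref{formcc}: the quadratic terms $x_1^2-x_2^2$ and $2x_1x_2$ come from $u^2$, and the terms $x_1x_3$, $x_2x_3$ and $\pm sx_j$ come from $uv$ once $v$ is expressed in terms of $x_3$ and $s$.

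\emph{Main obstacle.} I expect the cross-cap case to be the hard part, specifically showing that all generic positions of the slicing function relative to the cross-cap are equivalent, so that there is a single normal form with no moduli. The self-tangency and triple point cases are routine applications of transversality and the Morse lemma.
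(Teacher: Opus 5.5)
Your proposal is correct in outline. In the cross-cap case it takes a genuinely different route from the paper.

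\textbf{Cases (ii)--(v).} The paper gives no argument here and simply cites Ekholm's Proposition 5.3.2, so your sketches go beyond it. One small repair is needed in the triple-point case. After linearising the three branches, $s$ is not yet linear. Write $s=\sum_i\langle\alpha_i(\phi),\phi_i\rangle$ by Hadamard's lemma and replace $\phi_i$ by $A_i(\phi)\phi_i$, with $A_i$ invertible and first row $\alpha_i$. This makes $s$ linear while preserving each branch.

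\textbf{Case (i), comparison with the paper.} The paper changes target coordinates to $(u^2+v,uv,v)$, slices by the hyperplanes $\mathrm{Im}(u^2+v)=s$, and reads off (i) by substituting $\mathrm{Im}\,v=s-2x_1x_2$. For uniqueness it considers only linear slicings. It observes that the non-generic normal directions form a $3$-dimensional set $X\cup Y\subset\RP^5$ and concludes from connectedness of the complement. Two things are left implicit there: that a path of generic slicings yields equivalent families, and the case of nonlinear $s$. Your liftable-vector-field/Moser argument supplies exactly both. The paper's version buys brevity and a completely explicit slice.

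\textbf{Your `hope' holds.} Put $(U,V,W)=(u^2,uv,v)$ and $ds_0=\mathrm{Re}(\alpha\,dU+\beta\,dV+\gamma\,dW)$.
\begin{enumerate}
\item The symmetries $(u,v)\mapsto(au,bv)$ act by $(a^2\alpha,ab\beta,b\gamma)$.
\item The liftable fields $2V\partial_U+W^2\partial_V$ and $UW\partial_V+V\partial_W$ have lifts $v\partial_u$ and $uv\partial_v$. They shift $\beta$ by arbitrary complex multiples of $\alpha$ and of $\gamma$.
\item Hence $\{\alpha\gamma\ne0\}$ is a single open orbit, and it contains the paper's $\mathrm{Im}(U+W)$.
\end{enumerate}

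\textbf{Genericity conditions.} The condition $\gamma\ne0$ is automatic, not generic, because $s\circ F$ is the projection to $\R$. The genuine genericity condition is $\alpha\ne0$, i.e.\ $ds$ does not vanish on the tangent line $\{V=W=0\}$ of the target double point curve. It is not a condition on `the kernel direction', which lives in the source and is killed by $dF$.

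\textbf{The Moser step.} Run the linear path only after the $1$-jets agree. Apply to $s_t$ the two fields above, the weighted Euler fields $2U\partial_U+2V\partial_V+W\partial_W$ and $2U\partial_U+V\partial_V$, and their multiples by $i$. The resulting functions have differentials spanning $(\R^6)^*$ at $0$. So they generate $\mathfrak m\ni s_1-s_0$ uniformly in $t$, which means $s$ is even $1$-determined. Their lifts are smooth.

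\textbf{Landing on (i).} You must actually name the slice: $s=\mathrm{Im}(U+W)$, with coordinates $\mathrm{Re}(U+W),\mathrm{Re}\,V,\mathrm{Im}\,V,\mathrm{Re}\,W,\mathrm{Im}\,W$. Your account of the terms misses the cubic terms $2x_1x_2^2$ and $-2x_1^2x_2$, which come from $\mathrm{Im}\,v=s-2x_1x_2$. Slicing by $\mathrm{Im}\,W$ alone would give no cubic terms, but it is non-generic, since its zero level contains the double point curve.
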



Observe that the formulae \ref{form0} and \ref{form2} only differ in the sign of the parameter $s$; in other words, there is only one type of elliptic self-tangency move up to the sign of $s$. The reason for this is that attaching a $2$-handle to the double point surface of the map $F$ is the same as attaching a $0$-handle to it in the opposite direction.

\begin{prf}[of proposition \ref{prop:qholformula}]
  The formulae \ref{formtp}--\ref{form2} were given by Ekholm \cite[proposition 5.3.2]{3knots}. For us it is left to prove that the local form of a cross-cap move is given by the formula in \ref{formcc}. In other words, we have to prove that when slicing the image of the normal form of the cross-cap
  $$h\colon\C^2\to\C^3,(u,v)\mapsto(u^2,uv,v)$$
  by a one-parameter family $\{\R^5_s\}_{s\in\R}$ of affine $5$-spaces in $\C^3$ in a ``generic'' way, we obtain this formula. Here ``generic'' means that if $\R^5_0$ is the space containing the origin in $\C^3$, then $\R^5_0$ is transverse to $h$ in the following sense: at any point sufficiently close to the origin in $\C^2$, the image of the derivative of $h$ (which is a complex plane in $\C^3$) is not contained in $\R^5_0$; and the tangent (complex) line of the double point curve of $c$ at the origin is not contained in $\R^5_0$ either. We will first see that there is a generic such family along which the desired formula appears, then we will see that a generic family is essentially unique.

  By changing local coordinates in the target, we obtain from the map $h$ the map
  $$\tilde h\colon\C^2\to\C^3,(u,v)\mapsto(u^2+v,uv,v),$$
  and by switching to real coordinates via $u=a+ib,v=c+id$ we obtain the map
  $$\tilde g\colon\R^4\to\R^6,(a,b,c,d)\mapsto(a^2-b^2+c,2ab+d,ac-bd,ad+bc,c,d).$$
  The double point set of this map is $D(\tilde g)=\{(a,b,c,d)\in\R^4\mid c=d=0\}$ in the source and $\tilde D(\tilde g)=\{(A,B,C,D,E,F)\in\R^6\mid C=D=E=F=0\}$ in the target. The derivative $d\tilde g_{(a,b,c,d)}$ is
  $$
  \begin{pmatrix}
    2a & -2b & 1 & 0 \\
    2b & 2a & 0 & 1 \\
    c & -d & a & b \\
    d & c & b & a \\
    0 & 0 & 1 & 0 \\
    0 & 0 & 0 & 1
  \end{pmatrix}
  $$
  whose image is everywhere transverse to the hyperplane $\{(A,B,C,D,E,F)\in\R^6\mid B=0\}$, and so are its limits at the origin. This hyperplane is also transverse to $\tilde D(\tilde g)$ at the origin, hence it is an appropriate candidate as the slice $\R^5_0$; indeed, we put
  $$\R^5_s:=\{(A,B,C,D,E,F)\in\R^6\mid B=s\}.$$
  Then the restriction of $\tilde g$ to the preimage of $\R^5_s$ is the map $g_s\colon\R^3\to\R^5$ we get by substituting $x_1=a,x_2=b,x_3=c$ and $s=d+2ab$, that is,
  $$g_s\colon\R^3\to\R^5,(x_1,x_2,x_3)\mapsto(x_1^2-x_2^2+x_3,x_1x_3+2x_1x_2^2-sx_2,x_2x_3-2x_1^2x_2+sx_1,x_3,s-2x_1x_2).$$
  This is the formula we claimed.

  Next we will see that this is essentially the only formula that can appear when moving through a cross-cap by a generic family of real $5$-spaces. To a family $\FF=\{\R^5_s\}_{s\in\R}$ of slicing $5$-spaces in $\R^6$ we can assign the orthogonal complement of the subspace $\R^5_0$. This is a real line $L$ through the origin, i.e. an element in the projective space $\RP^5$. The images of the tangent planes of $\C^2$ under the derivative of the map $h$ outside the origin correspond to elements of $\CP^2$ (via assigning to a plane its orthogonal complement line in $\C^3$). Note that a point $P\in\CP^2$ can be considered as a projective line $\RP^1_P$ in $\RP^5$, and the image of a tangent plane corresponding to $P$ is in $\R^5_0$ precisely if we have $L\in\RP^1_P$. The derivative $dh_{(u,v)}$ is
  $$
  \begin{pmatrix}
    2u & 0 \\
    v & u \\
    0 & v
  \end{pmatrix},
  $$
  hence the orthogonal line of its image is generated by $\big(\frac{v^2}{2u^2},-\frac{v}{u},1\big)$ if $u\ne0$ and by $(1,0,0)$ if $u=0$. In particular, this means that the set of these lines is parametrised by the ratio $\frac vu$ which is a complex number if $u\ne0$ and $\infty$ if $u=0$, that is, an element in $\CP^1$. This means that the family of complex lines orthogonal complement to the image of the derivative of $h$ is a subset of $\CP^2$ homeomorphic to $\CP^1$. 
  Thus the real dimension of the union $X\subset\RP^5$ of the projective lines $\RP^1_P$, where $P$ runs through this family, is $1+2=3$. On the other hand, the orthogonal complement of the tangent (complex) line of the double point curve of $h$ in the target is a real $4$-space which corresponds to a projective subspace $Y\approx\RP^3$ in $\RP^5$. This tangent line is in $\R^5_0$ precisely if we have $L\in Y$.

  Now the family $\FF$ of slicing $5$-spaces in $\R^6$ is generic in the sense of the present proof, if and only if the point $L\in\RP^5$ is not in $X\cup Y$. Two such generic families correspond to two points in $\RP^5\setminus(X\cup Y)$, which can be connected by a path in $\RP^5\setminus(X\cup Y)$ by dimensional reasons. Such a path represents a smooth transformation of one cross-cap move formula to another one, hence the above reasoning yields that up to local diffeotopies there is only one way of representing a cross-cap move in local coordinates.
\end{prf}



\begin{rmk}
  The germs of the cross-cap
  $$\C^2\to\C^3,(u,v)\mapsto(u^2,uv,v)$$
  and the triple point
  $$\C^2\sqcup\C^2\sqcup\C^2\to\C^3,(u_1,v_1)\mapsto(u_1,v_1,0),(u_2,v_2)\mapsto(u_2,0,v_2),(u_3,v_3)\mapsto(0,u_3,v_3)$$
  at the origin are holomorphic (multi)germs with one special point at the origin. They have well-described link immersions (see e.g. \cite{associmm}). The formulae \ref{formcc} and \ref{formtp} in proposition \ref{prop:qholformula} partition these link immersions to three parts: if $D^5$ is the closed ball in $\R^5$ and $S^4$ is its boundary, then the link is partitioned as the union
  $$g_{-1}|_{g_{-1}^{-1}(D^5)}\cup\bigcup_{s\in(-1,1)} g_s|_{g_s^{-1}(S^4)}\cup g_1|_{g_1^{-1}(D^5)}.$$
  The middle part is essentially the the restriction $g_0|_{g_0^{-1}(S^4)}$ extended trivially in an additional dimension; the first and last parts are ``two halves'' of the link immersion glued at the middle part.
\end{rmk}


  

\subsection{Connection to complex Reidemeister moves}
\label{ssec:cplxmove}

In the following we will see that the moves of a quasi-holomorphic homotopy, except for the elliptic self-tangencies, can be partitioned to complex plane curves.

\begin{lemma}
  \label{lemma:cplxmove}
  Let $g_s\colon\coprod^k\R^3\to\R^5,~s\in[-1,1]$ be the normal form of the cross-cap, hyperbolic self-tangency or triple point move (where $k$ is $1$, $2$ or $3$ respectively). Then there is a stable holomorphic plane curve $\gamma\colon\coprod^k\C\imto\C^2$ such that for all $s\ne0$ the map $g_s$ is the union of a real one-parameter family of holomorphic curves $\{\gamma_{r,s}\colon\coprod^k\C\imto\C^2\}_{r\in\R}$ where each $\gamma_{r,s}$ is $\AA$-equivalent to $\gamma$. Further, we have
  \begin{enumerate}[label=\rm{(\roman*)}]
  \item\label{cmovecc} $\gamma(u)=(u-u^3,u^2)$ in the case of the cross-cap move;
  \item\label{cmovest} $\gamma(u)=(u,0)$ and $\gamma(v)=(v,v^2+1)$ in the first and second component respectively in the case of the hyperbolic self-tangency move;
  \item\label{cmovetp} $\gamma(u)=(u,0)$, $\gamma(v)=(1+v,1-v)$ and $\gamma(w)=(0,w)$ in the first, second and third component respectively in the case of the triple point move.
  \end{enumerate}
\end{lemma}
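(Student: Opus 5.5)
The plan is to realise each of the three normal forms as a real slicing of a holomorphic object, and then to foliate each real slice by complex lines. Write $\R^5_s=\{w\in\C^3\mid \operatorname{Im}\ell(w)=s\}$ for a complex linear form $\ell$. Each slice decomposes as
$$\R^5_s=\bigcup_{r\in\R}\{\ell=r+is\},$$
and each leaf $\{\ell=r+is\}$ is a complex plane $\C^2$. Intersecting a holomorphic map $H\colon\coprod^k\C^2\to\C^3$ with a leaf gives a holomorphic curve $\gamma_{r,s}\colon\coprod^k\C\to\C^2$, namely the restriction of $H$ to $\{\ell\circ H=r+is\}$. So $g_s$ is automatically the union of the curves $\gamma_{r,s}$, $r\in\R$. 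It then remains to show two things: that the given normal form $g_s$ is (up to local diffeomorphism) such a slicing, and that for $t=r+is\neq0$ all the curves $\gamma_t$ are $\AA$-equivalent to the listed $\gamma$.

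For the cross-cap this is almost explicit from the proof of proposition \ref{prop:qholformula}. There $g_s$ was obtained from $\tilde h(u,v)=(u^2+v,uv,v)$ by slicing with $B=\operatorname{Im}(u^2+v)=s$, so I take $\ell(w)=w_1$. The leaf $u^2+v=t$ gives $v=t-u^2$, and projecting to the remaining coordinates $(uv,v)$ yields
$$\gamma_t(u)=\big(u(t-u^2),\,t-u^2\big).$$
For $t\neq0$, substitute $u=\sqrt t\,u'$ (any branch). This gives $(t^{3/2}(u'-u'^3),\,t(1-u'^2))$, and after a linear coordinate change in the target it becomes $(u'-u'^3,u'^2)$, which is formula \ref{cmovecc}.

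For the triple point and the hyperbolic self-tangency I would first write holomorphic models whose real slices give Ekholm's formulae \ref{formtp} and \ref{form1}.
\begin{itemize}
\item Triple point: take the three coordinate planes in $\C^3$ and slice by $\operatorname{Im}$ of a generic linear form, e.g. $\ell=w_1+w_2+w_3$ after a linear change. A leaf $\ell=t$ meets the three planes in three lines in general position, and these can be moved affinely to $(u,0)$, $(1+v,1-v)$, $(0,w)$ for every $t\neq0$, giving \ref{cmovetp}.
\item Hyperbolic self-tangency: use the sheets $\{w_2=0\}$ and $\{w_2=w_1^2+w_3\}$, sliced by $\operatorname{Im}w_3=s$. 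The saddle $y_1^2-y_2^2$ is $\operatorname{Re}\big((y_1+iy_2)^2\big)$, which is what makes the index-$1$ case holomorphic while the indices $0$ and $2$ are not. A leaf $w_3=t$ gives the line $(u,0)$ and the parabola $(v,v^2+t)$. Rescaling $v=\sqrt t\,v'$ for $t\neq0$ turns these into $(u,0)$ and $(v,v^2+1)$, which is \ref{cmovest}.
\end{itemize}
As an alternative to these explicit rescalings, one can argue abstractly. The curves $\gamma_t$, $t\in\C\setminus\{0\}$, form a connected family of stable multigerms, since the leaf misses the singular point. Stability plus connectedness of $\C^*$ gives constant $\AA$-type by Mather's theory.

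The main obstacle is identifying the real slices of these holomorphic models with Ekholm's normal forms \ref{formtp} and \ref{form1} up to local diffeotopy. For this I would rerun the genericity argument from the proof of proposition \ref{prop:qholformula}. The bad slicing directions form a subset of $\RP^5$ of codimension at least $2$; here these are the directions containing a tangent complex plane of a sheet, or the tangent line of the double or triple locus. So any two generic slicings are connected by a path of generic ones. The holomorphic model with the chosen $\ell$ is generic, and it realises a move of the required type: a triple point, or a Morse index-$1$ critical point of $s$ on the double surface, checked from the real part of $w_1^2$. Since these local forms are unique by proposition \ref{prop:qholformula}, it is equivalent to $g_s$.
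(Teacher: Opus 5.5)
Your proposal is correct, and for the cross-cap it is the paper's argument. The paper's source change $x_3\mapsto x_3-x_1^2+x_2^2$ makes its parameter $t=x_3+is$ equal to your $w_1=u^2+v$. Its $\gamma_t(u)=(ut-u^3,u^2)$ differs from yours only by the target change $(A,B)\mapsto(A,t-B)$.

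For the triple point and the hyperbolic self-tangency you reach, up to complex linear changes, the same unfoldings as the paper: $(u,0,t),(t+v,t-v,t),(0,w,t)$ and $(u,0,t),(v,v^2+t,t)$. What differs is how you tie them to the normal forms \ref{formtp} and \ref{form1}. The paper writes explicit polynomial coordinate changes of $\R^4$ and $\R^6$, preserving $s$, that carry $(x,s)\mapsto(g_s(x),s)$ onto these unfoldings. You instead check that your holomorphic model sliced by $\operatorname{Im}\ell$ is a triple point, resp.\ index-$1$ self-tangency, move, and then invoke the classification in proposition \ref{prop:qholformula}.

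Your route buys the following:
\begin{enumerate}
\item It is conceptual and shows why exactly these moves admit the foliation. On the complex double curve the slicing function is the imaginary part of a holomorphic function, hence harmonic, so no index $0$ or $2$ critical point can arise.
\item Your rescaling $u=\sqrt t\,u'$ actually proves the $\AA$-equivalence of all $\gamma_t$, $t\neq0$, which the paper only asserts from stability of the unfolding. Keep it: the Mather alternative is a germ statement and does not by itself give global $\AA$-equivalence of maps $\coprod^k\C\to\C^2$.
\end{enumerate}

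The explicit route buys the following:
\begin{enumerate}
\item It does not lean on Ekholm's uniqueness result.
\item It identifies the whole normal form on $\coprod^k\R^3$ by global diffeomorphisms, whereas the classification gives an equivalence only near the instability, with $s$ rescaled.
\item It records exactly how $s$ sits in $t$ (the real part in the hyperbolic case). This is the bookkeeping used in remark \ref{rmk:gamma} and in the proof of theorem \ref{thm:noell}.
\end{enumerate}
For the triple point, a global linear equivalence is immediate anyway. Three $4$-planes in general position in $\R^6$ split it as $W_{12}\oplus W_{13}\oplus W_{23}$, the pairwise intersections. Any functional nonzero on each $W_{ij}$ can then be normalised by $\prod_{i<j}GL(W_{ij})$.

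One correction: your genericity rerun is misstated for the self-tangency. There $\R^5_0$ must contain the tangent plane of the double surface at the point, since that is what makes it a self-tangency, so this condition cannot be in the bad set. The relevant slicings are the functionals vanishing on $T_0D$; among these one excludes non-transversality to the sheets and a degenerate Hessian on $D$. Your final appeal to proposition \ref{prop:qholformula} already performs the identification, so the rerun can simply be dropped.
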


\begin{rmk}
  For the elliptic self-tangency move such a statement cannot hold, since in that case the double point curve of $g_s$ changes as the parameter $s$ passes through $0$, moreover it is compact for all $s$.
\end{rmk}

\begin{rmk}
  \label{rmk:mnb}
  Marar and Nuño-Ballesteros \cite{slicing} defined slicings of holomorphic map germs $(\C^2,0)\to(\C^3,0)$ by complex planes such that at each slice a complex plane curve appears. By stabilising these map germs they obtained that the curves in the slicing planes become stable almost everywhere, and they have \textit{cusps}, \textit{tacnodes} and \textit{triple points} in distinct parameters (these are the complex versions of the three Reidemeister moves). The above formulae \ref{cmovecc}, \ref{cmovest} and \ref{cmovetp} are local forms of the slices around these three instabilities respectively.
\end{rmk}

\begin{prf}[of lemma \ref{lemma:cplxmove}]
  Let $G\colon\coprod^k\R^4\to\R^6$ be the map $(x,s)\mapsto(g_s(x),s)$. In all three cases we will change local coordinates, transforming $G$ to be the union of a (complex) stable one-parameter unfolding $\{\gamma_t\}_{t\in\C}$ of a complex plane curve $\gamma_0$. In this case the maps $\gamma_t$ are pairwise $\AA$-equivalent stable maps for all $t\ne0$.

  In the case of the cross-cap move the map $G$ has the form
  \begin{alignat*}2
    G\colon\R^4&\to\R^6,\\
    (x_1,x_2,x_3,s)&\mapsto(x_1^2-x_2^2+x_3,x_1x_3+2x_1x_2^2-sx_2,x_2x_3-2x_1^2x_2+sx_1,x_3,s-2x_1x_2,s).
  \end{alignat*}
  By the local coordinate changes $(x_1,x_2,x_3,s)\mapsto(x_1,x_2,x_3-x_1^2+x_2^2,s)$ in the source and $(A,B,C,D,$ $E,F)\mapsto(B,C,-D+A,-E+B,A,F)$ in the target we obtain the map
  $$(x_1,x_2,x_3,s)\mapsto(x_1x_3-x_1^3+3x_1x_2^2-sx_2,x_2x_3+x_2^3-3x_1^2x_2+sx_1,x_1^2-x_2^2,2x_1x_2,x_3,s).$$
  This, when switching to the complex coordinates $u=x_1+ix_2,t=x_3+is$, has the form
  $$(u,t)\mapsto(ut-u^3,u^2,t).$$
  This defines the maps
  $$\gamma_t\colon\C\to\C^2,u\mapsto(ut-u^3,u^2),~t\in\C,$$
  which form a stable one-parameter unfolding of the plane curve $\gamma_0\colon u\mapsto(-u^3,u^2)$. For $r,s\in\R$ we define $\gamma_{r,s}:=\gamma_{r+is}$ and $\gamma:=\gamma_1$ which proves \ref{cmovecc}.

  Turning to the case of the hyperbolic self-tangency move, we have
  \begin{alignat*}2
    G\colon\R^4\sqcup\R^4&\to\R^6,\\
    (x_1,x_2,x_3,s)&\mapsto(x_1,x_2,0,x_3,0,s),\\
    (y_1,y_2,y_3,s)&\mapsto(y_1,y_2,y_1^2-y_2^2+s,0,y_3,s).
  \end{alignat*}
  By the local coordinate changes $(x_1,x_2,x_3,s)\mapsto(x_1,x_2,x_3+2x_1x_2,s)$ and $(y_1,y_2,y_3,s)\mapsto(y_1,y_2,$ $y_3+2y_1y_2,s)$ in the source and $(A,B,C,D,E,F)\mapsto(A,B,C,E,D-2AB+E,F)$ in the target we obtain the map
  \begin{alignat*}2
    (x_1,x_2,x_3,s)&\mapsto(x_1,x_2,0,0,x_3,s),\\
    (y_1,y_2,y_3,s)&\mapsto(y_1,y_2,y_1^2-y_2^2+s,y_3+2y_1y_2,y_3,s).
  \end{alignat*}
  This, when switching to the complex coordinates $u=x_1+ix_2,t=s+ix_3$ in the first component and $v=y_1+iy_2,t=s+iy_3$ in the second component, has the form
  $$(u,t)\mapsto(u,0,t),(v,t)\mapsto(v,v^2+t,t).$$
  This defines the maps
  $$\gamma_t\colon\C\sqcup\C\to\C^2,u\mapsto(u,0),v\mapsto(v,v^2+t),~t\in\C,$$
  which, again, form a stable one-parameter unfolding of the plane curve $\gamma_0\colon u\mapsto(u,0),v\mapsto(v,v^2)$. Then for $r,s\in\R$ we define $\gamma_{r,s}:=\gamma_{s+ir}$ and $\gamma:=\gamma_1$ proving \ref{cmovest}.

  Lastly, in the case of the triple point move we have
  \begin{alignat*}2
    G\colon\R^4\sqcup\R^4\sqcup\R^4&\to\R^6,\\
    (x_1,x_2,x_3,s)&\mapsto(x_1,x_2,x_3,0,0,s),\\
    (y_1,y_2,y_3,s)&\mapsto(y_1,s,0,y_2,y_3,s),\\
    (z_1,z_2,z_3,s)&\mapsto(0,z_2,z_1,z_2,z_3,s).
  \end{alignat*}
  By the local coordinate changes $(x_1,x_2,x_3,s)\mapsto(x_1,\frac12x_2,x_3-\frac12x_1,s)$, $(y_1,y_2,y_3,s)\mapsto(y_1+y_3,\frac12s-\frac12y_2,y_3-y_1,s)$ and $(z_1,z_2,z_3,s)\mapsto(z_3-\frac12z_1,\frac12z_2,z_1,s)$ in the source and $(A,B,C,D,E,F)\mapsto(A,2B-2D,E,2D,C+\frac12A+\frac12E,F)$ in the target we obtain the map
  \begin{alignat*}2
    (x_1,x_2,x_3,s)&\mapsto(x_1,x_2,0,0,x_3,s),\\
    (y_1,y_2,y_3,s)&\mapsto(y_1+y_3,y_2+s,y_3-y_1,s-y_2,y_3,s),\\
    (z_1,z_2,z_3,s)&\mapsto(0,0,z_1,z_2,z_3,s).
  \end{alignat*}
  This, when switching to the complex coordinates $u=x_1+ix_2,t=x_3+is$ in the first, $v=y_1+iy_2,t=y_3+is$ in the second and $w=z_1+iz_2,t=z_3+is$ in the third component, has the form
  $$(u,t)\mapsto(u,0,t),(v,t)\mapsto(t+v,t-v,t),(w,t)\mapsto(0,w,t).$$
  This defines the maps
  $$\gamma_t\colon\C\sqcup\C\sqcup\C\to\C^2,u\mapsto(u,0),v\mapsto(t+v,t-v),w\mapsto(0,w),~t\in\C,$$
  which, yet again, form a stable one-parameter unfolding of the plane curve $\gamma_0\colon u\mapsto(u,0),v\mapsto(v,-v),w\mapsto(0,w)$. Then for $r,s\in\R$ we define $\gamma_{r,s}:=\gamma_{r+is}$ and $\gamma:=\gamma_1$ proving \ref{cmovetp}. This completes the proof in all cases.
\end{prf}

\begin{rmk}
  \label{rmk:gamma}
  In the above proof the curve $\gamma$ was defined as $\gamma_{1,0}$ for the cross-cap and triple point moves, however, it is $\gamma_{0,1}$ for the hyperbolic self-tangency move. Hence in the cases \ref{cmovecc} and \ref{cmovetp} the partition of the map $g_s$ (for $s\ne0$) to the family of curves $\{\gamma_{r,s}\}_{r\in\R}$ (where each $\gamma_{r,s}$ is identified with $\gamma$) is independent of the parameter $s$, however, in the case \ref{cmovest} it depends on $s$. In particular, if for $s=-1$ we apply the local coordinate change $(U,V)\mapsto(U,V-U^2+1)$ in the target $\C^2$, then we obtain that the partition of the map $g_{-1}$ to the family $\{\gamma_{r,-1}\}_{r\in\R}$ corresponds to the partition of the map $g_1$ to the family $\{\gamma_{r,1}\}_{r\in\R}$ by exchanging the two source components.
\end{rmk}

\section{Fundamental group of the complement}
\label{sec:pi1}

\subsection{Knot groups of immersions}
\label{ssec:knotgrp}

The fundamental group of the complement of the image of a $2$-codimensional embedding is called its \textit{knot group} and it is an important invariant of its isotopy class; see \cite{kerv} for spheres and \cite{smith} for general manifolds (possibly with boundaries). If we change ``embedding'' to ``stable immersion'' and ``isotopy'' to ``homotopy through stable immersions'', then the same fundamental group becomes a similarly key invariant, which we will also call the \textit{knot group}. Proposition \ref{prop:isot} and remark \ref{rmk:isot} immediately imply the following.

\begin{crly}
  For two manifolds $M^n$ and $N^{n+2}$, the knot group is constant on the path components of the space of stable immersions $M^n\imto N^{n+2}$ and extends the notion of knot group from the space of embeddings $M^n\into N^{n+2}$.
\end{crly}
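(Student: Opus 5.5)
The statement has two parts: constancy of the knot group along path components of the space of stable immersions $M^n\imto N^{n+2}$, and compatibility with the classical knot group of embeddings. Both follow from proposition \ref{prop:isot} and remark \ref{rmk:isot}, so the plan is simply to unwind those two results.

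For constancy, let $f_0,f_1\colon M^n\imto N^{n+2}$ lie in the same path component of the space of stable immersions. Then there is a path $f_s$, $s\in[0,1]$, consisting of stable immersions. This is exactly the hypothesis of proposition \ref{prop:isot}. The proposition gives a diffeotopy $h_s$ of $N^{n+2}$ with $h_0=\id$ and $h_s\circ f_s=f_0$ for all $s$.

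Taking $s=1$, the diffeomorphism $h_1$ maps $f_1(M^n)$ onto $f_0(M^n)$. It therefore restricts to a homeomorphism
$$N^{n+2}\setminus f_1(M^n)\to N^{n+2}\setminus f_0(M^n).$$
Hence the complements are homeomorphic and their fundamental groups are isomorphic.

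Basepoints are not an issue. Since $f_i(M^n)$ has codimension $2$, each complement is connected, so $\pi_1$ is well defined up to isomorphism and independent of the basepoint.

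For the extension claim, suppose $f_0$ is an embedding. By remark \ref{rmk:isot}, every stable immersion in its path component is an embedding, and any path in the component is an isotopy. So on the embedding components the invariant just defined is the fundamental group of the complement of an embedding, i.e. the classical knot group of \cite{kerv} and \cite{smith}.

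The invariance established above therefore restricts, on embeddings, to the classical isotopy invariance of the knot group. There is no real obstacle here. The only point worth stating explicitly is that the diffeotopy from proposition \ref{prop:isot} maps image onto image, and hence complement onto complement.
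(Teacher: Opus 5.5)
Your proposal is correct and takes the same route as the paper, which derives this corollary directly from proposition \ref{prop:isot} (the diffeomorphism $h_1$ carries $f_1(M^n)$ onto $f_0(M^n)$, so the complements are homeomorphic) and remark \ref{rmk:isot} (a path component containing an embedding consists of embeddings joined by isotopies). Your remarks on basepoints and on the connectedness of the complement just make explicit what the paper leaves implicit.
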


\begin{rmk}
  \label{rmk:immemb}
  For a stable immersion $f\colon M^n\imto N^{n+2}$, removing small tubular neighbourhoods of the multiple points of $f$ both in the source and in the target yields an embedding $\widehat f\colon\widehat M^n\into\widehat N^{n+2}$ between manifolds with corners (homeomorphic to manifolds with boundaries). Clearly, $N^{n+2}\setminus f(M^n)$ is homeomorphic to $\widehat N^{n+2}\setminus\widehat f(\widehat M^n)$, hence the immersion $f$ and the embedding $\widehat f$ have the same knot group. 
\end{rmk}

The following notion was defined for $2$-codimensional embeddings in \cite{smith}; we extend it to immersions now.

\begin{defi}
  For a stable immersion $f\colon M^n\imto N^{n+2}$, an element in $\pi_1(N^{n+2}\setminus f(M^n))$ is called a \textit{meridian class}, if it is represented by the embedding of a fibre of the circle bundle of the normal bundle of $f$. The kernel of the homomorphism $\pi_1(N^{n+2}\setminus f(M^n))\to\pi_1(N^{n+2})$ is called the \textit{meridian subgroup}.
\end{defi}

\begin{rmk}
  \label{rmk:meridgen}
  By remark \ref{rmk:immemb} and \cite[proposition I.3]{smith} the meridian subgroup of the knot group is 
  the normal closure of one meridian class.
\end{rmk}

\begin{rmk}
  \label{rmk:meridbound}
  If $f\colon M^n\imto N^{n+2}$ is an immersion, and $W\subset N^{n+2}$ is a tubular neighbourhood of the image of $f$, then the complement of $W$ is a manifold with corners, hence the boundary $\partial W$ is the union of the smooth boundary part $\overline\partial W$ and the corners $\widehat\partial W$. Meridian classes in the knot group of $f$ are elements in the image of the homomorphism induced by the inclusion $\overline\partial W\subset N^{n+2}\setminus f(M^n)$ in fundamental groups.
\end{rmk}

Although knot groups of $2$-codimensional embeddings (thus also of immersions) were extensively analysed (e.g. in the above cited papers), to our knowledge the change of a knot group under a homotopy through an instability was not studied. Theorems \ref{thm:noell} and \ref{thm:cyc} describe the change in the knot groups of immersions $M^3\imto N^5$ when we allow unstable immersions and maps with isolated singularities corresponding to cross-cap moves.

\subsection{Proof of the main theorems}
\label{ssec:prfmain}



Recall the main theorems:

\begin{thmref}{thm:noell}
  If $f_s\colon M^3\to N^5,~s\in[-1,1]$ is a quasi-holomorphic homotopy without elliptic self-tangency moves, then the groups $\pi_1(N^5\setminus f_{-1}(M^3))$ and $\pi_1(N^5\setminus f_1(M^3))$ are isomorphic.
\end{thmref}

\begin{thmref}{thm:cyc}
  If $f_s\colon M^3\to N^5,~s\in[-1,1]$ is a quasi-holomorphic homotopy which has only one elliptic self-tangency move, then there are presentations of the groups $\pi_1(N^5\setminus f_{\pm1}(M^3))$ such that their generator sets coincide and their relation sets only differ in one commutator relation: for two generators $a,b$ one of the above groups possesses the relation $ab=ba$ and the other one does not.
\end{thmref}

Their proofs will be started together, then we separate the conditions later. Clearly it is sufficient to 
describe how the knot group changes when passing through one instability. Thus we may suppose that $f_s\colon M^3\to N^5,~s\in[-1,1]$ is a quasi-holomorphic homotopy where $f_s$ is a stable immersion for all $s\ne0$ and the map $f_0$ has only one point of instability $p\in N^5$.  
We will denote the image $f_s(M^3)$ by $M_s\subset N^5$. Let $F\colon M^3\times[-1,1]\to N^5\times[-1,1]$ be the map $F(x,s):=(f_s(x),s)$. By possibly changing $f_{\pm1}$ to $f_{\pm\varepsilon}$ for some small $\varepsilon>0$, we can assume that there is a coordinate neighbourhood $U\subset N^5$ of $p$ such that inside $U\times[-1,1]$ the map $F$ has one of the forms described in proposition \ref{prop:qholformula} 
and in $(N^5\setminus U)\times[-1,1]$ it is of the form $f_0|_{f_0^{-1}(N^5\setminus U)}\times\id_{[-1,1]}$. Note that the subspace $N^5\setminus(U\cup M_s)$ is the same for all $s\in[-1,1]$.

Let $i^\pm\colon\partial U\setminus M_{\pm1}\into N^5\setminus(U\cup M_{\pm1})$ and $j^\pm\colon\partial U\setminus M_{\pm1}\into\ol U\setminus M_{\pm1}$ be the inclusions. By the Van Kampen theorem we have
$$\pi_1(N^5\setminus M_{\pm1})\cong\Big(\pi_1(N^5\setminus(U\cup M_{\pm1}))*\pi_1(\ol U\setminus M_{\pm1})\Big)/R_\pm$$
where $*$ denotes free product and $R_\pm$ is the normal subgroup generated by the elements $i^\pm_\#(a)j^\pm_\#(a)^{-1}$ for all $a\in\pi_1(\partial U\setminus M_{\pm1})$. Note that we have natural isomorphisms $\pi_1(\partial U\setminus M_{-1})\cong\pi_1(\partial U\setminus M_1)$ and $\pi_1(N^5\setminus(U\cup M_{-1}))\cong\pi_1(N^5\setminus(U\cup M_1))$, and the homomorphisms $i^-_\#$ and $i^+_\#$ coincide. Hence the only possible change in the fundamental group happens in
$$j^\pm_\#\colon\pi_1(\partial U\setminus M_{\pm1})\to\pi_1(\ol U\setminus M_{\pm1}).$$
Put $G:=\pi_1(N^5\setminus(U\cup M_{\pm1}))$. From now on we separate the two theorems.

\begin{prf}[of theorem \ref{thm:noell}]
  Let $\gamma\colon\coprod^k\C\imto\C^2$ be the complex plane curve in lemma \ref{lemma:cplxmove} (where $k$ is $1$, $2$ or $3$ depending on the type of move); let $D^4\subset\C^2$ be the closed ball of a sufficiently large radius, and $S^3:=\partial D^4$. Then the space $\ol U\setminus M_{\pm1}$ is homeomorphic to the space $(D^4\setminus\gamma(\gamma^{-1}(D^4)))\times[0,1]$, and $\partial U\setminus M_{\pm1}$ to its boundary, which is the union of the space $(S^3\setminus\gamma(\gamma^{-1}(S^3)))\times[0,1]$ and two copies of the space $D^4\setminus\gamma(\gamma^{-1}(D^4))$ glued to it on the two boundary components.
  
  Suppose first that the instability of the map $f_0$ corresponds to a cross-cap move. Then $\gamma\colon\C\to\C^2$ is the map $u\mapsto(u-u^3,u^2)$. In real coordinates this map has the form
  $$(x_1,x_2)\mapsto(x_1-x_1^3+3x_1x_2^2,x_2+x_2^3-3x_1^2x_2,x_1^2-x_2^2,2x_1x_2),$$
  which has one double point $(0,0,1,0)\in\R^4$. Let $X$ and $Y$ be the image of this map intersected by the half-spaces $H_1:=\{(A,B,C,D)\in\R^4\mid C<\frac12\}$ and $H_2:=\{(A,B,C,D)\in\R^4\mid C>\frac12\}$ respectively. Then the embeddings $X\into H_1$ and $Y\into H_2$ are isotopic to the standard embedding $X\approx\R^2\subset\R^4\approx H_1$ and the embedding of two orthogonal complement planes $Y\approx\R^2\vee\R^2\subset\R^4\approx H_2$ respectively; let $Y_1\subset Y$ and $Y_2\subset Y$ be the subspaces corresponding to these $\R^2$'s. In the intersection $\ol H_1\cap\ol H_2$ we get that $\partial X=\partial Y$ is isotopic to the embedding of two disjoint lines; moreover $\partial X\subset\ol X\approx D^2$ is isotopic to the embedding of two open intervals into the boundary of $D^2$ and $\partial Y\subset\ol Y$ is isotopic to the embedding of one open interval into the boundary of $\ol Y_1\approx D^2$ and one into the boundary of $\ol Y_2\approx D^2$. The fundamental group of $\ol H_1\setminus\ol X\cong S^1$ is $\Z$ generated by the homotopy class of a fibre of the circle bundle of the normal bundle of $X\subset H_1$; the fundamental group of $\ol H_2\setminus\ol Y\cong T^2$ is $\Z^2$ generated by the homotopy classes of fibres of the circle bundles of the normal bundles of $Y_1\subset H_2$ and $Y_2\subset H_2$. Thus the Van Kampen theorem implies that we have
  $$\pi_1(D^4\setminus\gamma(\gamma^{-1}(D^4)))\cong\Z$$
  generated by the homotopy class of a fibre of the circle bundle of the normal bundle of the immersion $\gamma$. This yields (again using the Van Kampen theorem) that both $\pi_1(\partial U\setminus M_{\pm1})$ and $\pi_1(\ol U\setminus M_{\pm 1})$ are isomorphic to $\Z$ generated by $a_\pm$ and $a'_\pm$ respectively, and $j^\pm_\#$ is the isomorphism $a_\pm\mapsto a'_\pm$. Since the natural identification of $\partial U\setminus M_{-1}$ and $\partial U\setminus M_1$ takes the generator $a_-$ to $a_+$ (see remark \ref{rmk:gamma}), we obtain that $i^\pm_\#(a_\pm)$ is the same element in $G$. Thus we get
  \begin{alignat*}2
    \pi_1(N^5\setminus M_{-1})&\cong\Big(G*\Z\la a'_-\ra\Big)/(i^-_\#(a_-){a'_-}^{-1})\cong \\
                              &\cong\Big(G*\Z\la a'_+\ra\Big)/(i^+_\#(a_+){a'_+}^{-1})\cong\pi_1(N^5\setminus M_1).
  \end{alignat*}
  
  Now suppose that the instability of the map $f_0$ corresponds to a hyperbolic self-tangency move. Then $\gamma\colon\C\sqcup\C\to\C^2$ is the map $u\mapsto(u,0),v\mapsto(v,v^2+1)$. In real coordinates this map has the form
  \begin{alignat*}2
    (x_1,x_2)&\mapsto(x_1,x_2,0,0),\\
    (y_1,y_2)&\mapsto(y_1,y_2,y_1^2-y_2^2+1,2y_1y_2),
  \end{alignat*}
  which has two double points $(0,\pm1,0,0)\in\R^4$. Let $X$ and $Y$ be the image of this map intersected by the half-spaces $H_1:=\{(A,B,C,D)\in\R^4\mid B<0\}$ and $H_2:=\{(A,B,C,D)\in\R^4\mid B>0\}$ respectively. Then the embeddings $X\into H_1$ and $Y\into H_2$ are both isotopic to embeddings of two orthogonal complement planes $X\approx Y\approx\R^2\vee\R^2\subset\R^4\approx H_1\approx H_2$; let $X_1,X_2\subset X$ and $Y_1,Y_2\subset Y$ be the subspaces corresponding to these $\R^2$'s. The fundamental groups of $\ol H_1\setminus\ol X\cong T^2$ and $\ol H_2\setminus\ol Y\cong T^2$ are both $\Z^2$ generated by the homotopy classes of fibres of the circle bundles of the normal bundles of $X_1$ and $X_2$ and of $Y_1$ and $Y_2$ respectively. Similarly to the case of the cross-cap move, here the Van Kampen theorem yields that in the fundamental group of their union the generators corresponding to $X_1$ and $Y_1$ are identified and so are the generators corresponding to $X_2$ and $Y_2$. Hence we have
  $$\pi_1(D^4\setminus\gamma(\gamma^{-1}(D^4)))\cong\Z^2$$
  generated by the homotopy classes of fibres of the circle bundles of the normal bundles of the restriction of $\gamma$ to the two components. This yields (again using the Van Kampen theorem) that both $\pi_1(\partial U\setminus M_{\pm1})$ and $\pi_1(\ol U\setminus M_{\pm 1})$ are isomorphic to $\Z^2$ generated by $a_\pm,b_\pm$ and $a'_\pm,b'_\pm$ respectively, and $j^\pm_\#$ is the isomorphism $a_\pm\mapsto a'_\pm,b_\pm\mapsto b'_\pm$. Now the natural identification of $\partial U\setminus M_{-1}$ and $\partial U\setminus M_1$ takes the generator $a_-$ to $b_+$ and $b_-$ to $a_+$ (see remark \ref{rmk:gamma}), hence we have $i^+_\#(a_+)=i^-_\#(b_-)$ and $i^+_\#(b_+)=i^-_\#(a_-)$ in $G$. Thus we get
  \begin{alignat*}2
    \pi_1(N^5\setminus M_{-1})&\cong\Big(G*\Z\la a'_-,b'_-\ra\Big)/(i^-_\#(a_-){a'_-}^{-1},i^-_\#(b_-){b'_-}^{-1})\cong \\
                              &\cong\Big(G*\Z\la a'_+,b'_+\ra\Big)/(i^+_\#(b_+){a'_+}^{-1},i^+_\#(a_+){b'_+}^{-1})\cong\pi_1(N^5\setminus M_1).
  \end{alignat*}
  
  Lastly, suppose that the instability of the map $f_0$ corresponds to a triple point move. Then $\gamma\colon\C\sqcup\C\sqcup\C^2$ is the map $u\mapsto(u,0),v\mapsto(1+v,1-v),w\mapsto(0,w)$. In real coordinates this map has the form
  \begin{alignat*}2
    (x_1,x_2)&\mapsto(x_1,x_2,0,0),\\
    (y_1,y_2)&\mapsto(1+y_1,y_2,1-y_1,-y_2),\\
    (z_1,z_2)&\mapsto(0,0,z_1,z_2),
  \end{alignat*}
  which has three double points $(0,0,0,0),(2,0,0,0),(0,0,2,0)\in\R^4$. Let $X$, $Y$ and $Z$ be the image of this map intersected by $H_1:=\{(A,B,C,D)\in\R^4\mid A,C<1\}$, $H_2:=\{(A,B,C,D)\in\R^4\mid A>1\}$ and $H_2:=\{(A,B,C,D)\in\R^4\mid A<1,C>1\}$ respectively. Then all three of the embeddings $X\into H_1$, $Y\into H_2$ and $Z\into H_3$ are isotopic to embeddings of two orthogonal complement planes $X\approx Y\approx Z\approx\R^2\vee\R^2\subset\R^4\approx H_1\approx H_2\approx H_3$; let $X_1,X_2\subset X$, $Y_1,Y_2\subset Y$ and $Z_1,Z_2\subset Z$ be the subspaces corresponding to these $\R^2$'s. The fundamental groups of $\ol H_1\setminus\ol X\cong T^2$, $\ol H_2\setminus\ol Y\cong T^2$ and $\ol H_3\setminus\ol Z\cong T^2$ are all $\Z^2$ generated by the homotopy classes of fibres of the circle bundles of the normal bundles of $X_1$ and $X_2$, of $Y_1$ and $Y_2$ and of $Z_1$ and $Z_2$ respectively. Yet again the Van Kampen theorem implies that in the fundamental group $\pi_1((\ol H_1\setminus\ol X)\cup(\ol H_2\setminus\ol Y))$ the generators corresponding to $X_1$ and $Y_1$ are identified, then in the fundamental group of the union of this with $\ol H_3\setminus\ol Z$ the generators corresponding to $Z_1$ and $Z_2$ are identified with those corresponding to $X_2$ and $Y_2$ respectively. Hence we have
  $$\pi_1(D^4\setminus\gamma(\gamma^{-1}(D^4)))\cong\Z^3$$
  generated by the homotopy classes of fibres of the circle bundles of the normal bundles of the restriction of $\gamma$ to the three components. This yields (using the Van Kampen theorem once more) that both $\pi_1(\partial U\setminus M_{\pm1})$ and $\pi_1(\ol U\setminus M_{\pm1})$ are isomorphic to $\Z^3$ generated by $a_\pm,b_\pm,c_\pm$ and $a'_\pm,b'_\pm,c'_\pm$ respectively, and $j^\pm_\#$ is the isomorphism $a_\pm\mapsto a'_\pm,b_\pm\mapsto b'_\pm,c_\pm\mapsto c'_\pm$. Now, as in the case of the cross-cap move, the natural identification of $\partial U\setminus M_{-1}$ and $\partial U\setminus M_1$ takes the generators $a_-,b_-,c_-$ to $a_+,b_+,c_+$ respectively (see remark \ref{rmk:gamma}), hence we have $i^+_\#(a_+)=i^-_\#(a_-)$, $i^+_\#(b_+)=i^-_\#(b_-)$ and $i^+_\#(c_+)=i^-_\#(c_-)$ in $G$. Thus we get
  \begin{alignat*}2
    \pi_1(N^5\setminus M_{-1})&\cong\Big(G*\Z\la a'_-,b'_-,c'_-\ra\Big)/(i^-_\#(a_-){a'_-}^{-1},i^-_\#(b_-){b'_-}^{-1},i^-_\#(c_-){c'_-}^{-1})\cong \\
                              &\cong\Big(G*\Z\la a'_+,b'_+,c'_+\ra\Big)/(i^+_\#(a_+){a'_+}^{-1},i^+_\#(b_+){b'_+}^{-1},i^+_\#(c_+){c'_+}^{-1})\cong\pi_1(N^5\setminus M_1).
  \end{alignat*}

  This concludes the proof in the case of all three moves.
\end{prf}

\begin{rmk}
  \label{rmk:monodromy}
  Consider the complex plane curves $\gamma_{r,s},~(r,s)\in\R^2$ in lemma \ref{lemma:cplxmove}, which were essentially used in the above proof. One would expect that the monodromy action of the parameters traversing the circle $S^1\subset\R^2$ appears in the change of fundamental groups computed above; however, it does not. In the computation of the knot group of $f_{-1}$ (resp. $f_1$) we use the family of curves $\{\gamma_{r,-1}\}_{r\in\R}$ (resp. $\{\gamma_{r,1}\}_{r\in\R}$). In all three cases we found that the embedding of one ``fibre'' $D^4\setminus\gamma_{r,\pm1}(\gamma_{r,\pm1}^{-1}(D^4))$ (for some fixed $r$) into both $\ol U\setminus M_{\pm1}$ and $\partial U\setminus M_{\pm1}$ induces isomorphisms in the fundamental groups. Hence in the application of the Van Kampen theorem, when we identify the gluing regions $\partial U\setminus M_{-1}$ and $\partial U\setminus M_{1}$ with each other, all information on the fundamental group is contained in the ``fibres'' $D^4\setminus\gamma_{r,-1}(\gamma_{r,-1}^{-1}(D^4))$ and $D^4\setminus\gamma_{r,1}(\gamma_{r,1}^{-1}(D^4))$ which are identified via the second parameter $s$ traversing $[-1,1]$. This identification of ``fibres'' can depend on whether the fixed parameter $r$ is positive or negative (this is where the monodromy appears), but fixing any choice yields an isomorphism of fundamental groups. Consequently, the knot groups of $f_{\pm1}$ are isomorphic, but the monodromy action can come up in the concrete construction of their isomorphism.
\end{rmk}
  
\begin{prf}[of theorem \ref{thm:cyc}]
  We can suppose that the instability of the map $f_0$ corresponds to an index-$0$ self-tangency move (see proposition \ref{prop:qholformula}). The map $f_{\pm1}$ restricted to the preimage of $\ol U$ is of the form
  \begin{alignat*}2
    g_{\pm1}\colon D^3\sqcup D^3&\to D^5,\\
    (x_1,x_2,x_3)&\mapsto(x_1,x_2,0,x_3,0),\\
    (y_1,y_2,y_3)&\mapsto(y_1,y_2,y_1^2+y_2^2\mp1,0,y_3),
  \end{alignat*}
  where $D^3$ and $D^5$ are closed balls in $\R^3$ and $\R^5$ respectively of sufficiently large radius. This implies that $g_{-1}$ can be identified with the embedding of two parallel subspaces $\R^3$ into $\R^5$ (intersected by $D^5$). Hence the groups $\pi_1(\partial U\setminus M_{-1})$ and $\pi_1(\ol U\setminus M_{-1})$ are both isomorphic to the free group on two elements generated by the homotopy classes of fibres of the circle bundles of the normal bundle of $f_{-1}$ restricted to the two components of the preimage of $\partial U$ and $\ol U$ respectively. Denote these generators by $a_-,b_-$ and $a'_-,b'_-$ respectively; then $j^-_\#$ is the isomorphism $a_-\mapsto a'_-,b_-\mapsto b'_-$. Moving to $f_1$, the fundamental group $\pi_1(\partial U\setminus M_1)$ is still the free group generated by the homotopy classes $a_+$ and $b_+$ of fibres of the circle bundles of the normal bundle of $f_1|_{f_1^{-1}(\partial U)}$ restricted to the two components. The natural identification of $\partial U\setminus M_{-1}$ and $\partial U\setminus M_1$ takes the generators $a_-,b_-$ to $a_+,b_+$ respectively, hence $i^-_\#(a_-)=i^+_\#(a_+),i^-_\#(b_-)=i^+_\#(b_+)$.

  To compute the fundamental group of $\ol U\setminus M_1$ observe that the map $g_1$ has a rotational symmetry with respect to the $2$-codimensional subspace $H:=\{(A,B,C,D,E)\in D^5\mid A=B=0\}$. One fibre of this symmetry is the restriction of $g_1$ to the preimage of $F:=\{(A,B,C,D,E)\in D^5\mid A\ge0,B=0\}$. Let $X$ denote the image of $g_1$ intersected by $F$. Then the embedding $X\into F$ is isotopic to the embedding of two orthogonal subspaces $\R^2\vee\R^2\subset\R^4$ intersected by a closed ball; let $X_1,X_2\subset X$ be the subspaces corresponding to these $\R^2$'s. Then for $i=1,2$ we have $X_i\approx D^2$ and the intersection $X_i\cap H$ is an interval embedded into the boundary of this disk. Then we have
  $$\ol U\setminus M_1\approx\Big((F\setminus X)\times S^1\Big)/\big((x,t_1)\sim(x,t_2),~x\in H,t_1,t_2\in S^1\big).$$
  The space $F\setminus X$ deformation retracts to the torus $T^2$ and this retraction takes the subspace $H\setminus X$ to the union of a longitude and a meridian $S^1\vee S^1\subset T^2$. Hence we have
  $$\ol U\setminus M_1\cong\big(T^2\times S^1\big)/\big((x,t_1)\sim(x,t_2),~x\in S^1\vee S^1,t_1,t_2\in S^1\big),$$
  which is homotopy equivalent to the union of the spaces $T^2\times S^1$ and $(S^1\vee S^1)\times D^2$ glued along $(S^1\vee S^1)\times S^1$. Thus by the Van Kampen theorem the fundamental group of $\ol U\setminus M_1$ is $\Z^2$ generated by the homotopy classes of fibres of the circle bundles of the normal bundle of the restriction of $f_1$ to the two components of the preimage of $\ol U$.

  Denoting the above generators of $\pi_1(\ol U\setminus M_1)$ by $a'_+,b'_+$, we get $j^+_\#(a_+)=a'_+$ and $j^+_\#(b_+)=b'_+$. Hence we have
  \begin{alignat*}2
    \pi_1(N^5\setminus M_{-1})&\cong\Big(G*\la a'_-,b'_-\ra\Big)/(i^+_\#(a_+){a'_-}^{-1},i^+_\#(b_+){b'_-}^{-1}),\\
    \pi_1(N^5\setminus M_1)&\cong\Big(G*\Z\la a'_+,b'_+\ra\Big)/(i^+_\#(a_+){a'_+}^{-1},i^+_\#(b_+){b'_+}^{-1}).
  \end{alignat*}
  Thus there are presentations of the groups $\pi_1(N^5\setminus M_{\pm1})$ such that their generator and relation sets are those of $G$, with two additional generators $a,b$ (corresponding to $a'_\pm,b'_\pm$ respectively) and additional relations corresponding to the identification of $a,b$ with $i^+_\#(a_+),i^+_\#(b_+)$ respectively, and in the case of $\pi_1(N^5\setminus M_1)$ yet another additional relation $ab=ba$. This is what we claimed.
\end{prf}

\begin{rmk}
  \label{rmk:meridcyc}
  The above proof shows that at least one of the generators $a,b$ in theorem \ref{thm:cyc} is a meridian class in both groups $\pi_1(N^5\setminus f_{\pm1}(M^3))$. 
\end{rmk}

\begin{rmk}
  \label{rmk:index}
  The above proof also shows that way the new relation $ab=ba$ in theorem \ref{thm:cyc} appears, depends on the index of the handle attached to the double point surface in the self-tangency move: if it is a $0$-handle (resp. $2$-handle) attachment when moving from $-1$ to $1$, then the relation $ab=ba$ is present in the group $\pi_1(N^5\setminus f_{1}(M^3))$ (resp. in $\pi_1(N^5\setminus f_{-1}(M^3))$).
\end{rmk}

\subsection{Corollaries and final observations}
\label{ssec:crly}

The following is immediate from theorems \ref{thm:noell} and \ref{thm:cyc}.

\begin{crly}
  If $f_0,f_1\colon M^3\imto N^5$ are quasi-holomorphically homotopic immersions, then their knot groups have presentations on the same generator set such that their relation sets only differ in some commutators. In particular, we have
  $$H_1(N^5\setminus f_0(M^3))\cong H_1(N^5\setminus f_1(M^3)).$$
\end{crly}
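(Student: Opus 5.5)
We prove the corollary by decomposing the quasi-holomorphic homotopy into elementary moves and tracking presentations. Since $F$ is stable, there are finitely many exceptional parameters $0<s_1<\dots<s_m<1$, each with exactly one point of instability. Choose intermediate parameters $0=t_0<s_1<t_1<\dots<s_m<t_m=1$. Between consecutive exceptional parameters, $f_s$ is a homotopy through stable immersions, so by proposition \ref{prop:isot} the knot group is unchanged there. It therefore suffices to compare the knot groups of $f_{t_{k-1}}$ and $f_{t_k}$ across a single move, and then concatenate.

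For a single move, use the presentation from the common part of the proofs of theorems \ref{thm:noell} and \ref{thm:cyc}. Each knot group is of the form $(G*\pi_1(\ol U\setminus M_{\pm}))/R_\pm$, with $G$ fixed.
\begin{itemize}
\item For a cross-cap, triple point or hyperbolic self-tangency move, the explicit presentations in the proof of theorem \ref{thm:noell} use the same generators and the same relations, after renaming the local generators as in remark \ref{rmk:gamma}.
\item For an elliptic move, the proof of theorem \ref{thm:cyc} gives presentations on a common generator set whose relations differ by the single commutator $ab=ba$.
\end{itemize}

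The main obstacle is concatenation. The generator set used at step $k$ is adapted to the neighbourhood $U_k$, and a priori it differs from the one used at step $k+1$. To fix this, start from one presentation of $\pi_1(N^5\setminus f_0(M^3))$. At each move, the current presentation is transformed by a Tietze move: we add the local generators $a',b',\dots$ together with relations identifying them with words in the previous generators, which is exactly the shape of the Van Kampen presentations above. After this, the generator set is enlarged in the same way on both sides of the move. Hence two presentations of consecutive groups can always be chosen on a common generator set, differing by at most one commutator.

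To get a single generator set for all of $f_0$ and $f_1$, take the union of all the auxiliary generators introduced along the way. Add them with defining relations to every group; for a given group they are redundant by Tietze moves. The relation sets then differ only in the commutators created or destroyed at the elliptic moves. If this bookkeeping proves too delicate, the fallback is to state the result as a chain of such presentations, which is still enough for the homology claim.

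For the homology statement, abelianize. Commutator relations become trivial in the abelianization, so all the groups in the chain have isomorphic abelianizations. By Hurewicz, $H_1$ is the abelianization of $\pi_1$, which gives $H_1(N^5\setminus f_0(M^3))\cong H_1(N^5\setminus f_1(M^3))$. This part does not even need the common generator set across the whole homotopy, only the move-by-move comparison.
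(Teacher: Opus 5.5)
\paragraph{Verdict.} Your overall route is the paper's, and your abelianization argument for $H_1$ is complete. The paper calls the corollary immediate from Theorems \ref{thm:noell} and \ref{thm:cyc}, which is the same move-by-move induction you set up, and the $H_1$ step only uses comparisons between consecutive groups. The gap is in the paragraph that is supposed to give one generator set for $f_0$ and $f_1$ with relation sets differing only in commutators.

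\paragraph{Where the concatenation fails.} The sentence ``the relation sets then differ only in the commutators created or destroyed at the elliptic moves'' is asserted, not proved, and the mechanism you describe does not give it.
\begin{enumerate}
\item Adding the auxiliary generators ``with defining relations to every group'' produces defining words that depend on the group. So these relations are not common to the different presentations.
\item Acting on ``the current presentation'' by Tietze moves only works at index-$0$ moves (read from $0$ to $1$). There the new group is the old one modulo one extra commutator $[a,b]$ of two meridian words.
\item At an index-$2$ move it is the old group that is the quotient: $G_{k-1}=G_k/\langle\langle[c,d]\rangle\rangle$. To present $G_k$ you must first re-present $G_{k-1}$ as $\langle X'\mid\mathcal R'\cup\{[c,d]\}\rangle$, using the Van Kampen decomposition for the new neighbourhood $U_k$. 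No Tietze move applied to the earlier presentation yields this.
\item Now suppose an index-$0$ move comes before the index-$2$ move, so that $G_0\twoheadrightarrow G_1\twoheadleftarrow G_2$. The relators in $\mathcal R'$ hold in $G_1$, but in general only modulo the earlier commutator $[a,b]$. So they cannot be put into a set of relations shared with $G_0$.
\item Nothing in your argument produces a group of which both $G_0$ and $G_2$ are quotients by commutators, and that is exactly what the strong statement requires.
\end{enumerate}

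\paragraph{What your method does prove.} Let $\phi\colon G_{k-1}\twoheadrightarrow G_k$ have kernel $\langle\langle[\alpha,\beta]\rangle\rangle$, and let $\Psi\colon P\twoheadrightarrow G_{k-1}$ be any surjection. Then for any lifts $\tilde\alpha,\tilde\beta\in P$ we have $\ker(\phi\circ\Psi)=\ker\Psi\cdot\langle\langle[\tilde\alpha,\tilde\beta]\rangle\rangle$.

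Hence the strong statement follows whenever some group $P$ in the chain surjects onto both end groups along the chain. This happens, for example, if all elliptic moves have the same index, or if all index-$2$ moves precede all index-$0$ moves. In that case, add the lifted meridians as generators to a single presentation of $P$. The defining relations are then relations of $P$ itself, hence genuinely shared, and the two end groups differ from $P$ only by the lifted commutators.

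\paragraph{The remaining case.} When an index-$0$ move precedes an index-$2$ move, you need an additional argument. The plain induction (which is all ``immediate'' amounts to) does not supply one. Absent that, you should settle for your fallback: a chain of presentations, consecutive ones differing by one commutator. That is what the induction literally proves, and it is enough for the $H_1$ isomorphism.
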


Taking remark \ref{rmk:meridcyc} into account we get:

\begin{crly}
  \label{crly:meridfactor}
  If $f_0,f_1\colon M^3\imto N^5$ are quasi-holomorphically homotopic immersions, 
  then the factors of their knot groups by their meridian subgroups are isomorphic.
\end{crly}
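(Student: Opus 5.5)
It suffices to treat a single instability, since a quasi-holomorphic homotopy is a finite concatenation of such moves, and isomorphism of the quotients is transitive. For moves other than elliptic self-tangencies, theorem \ref{thm:noell} already gives an isomorphism of the knot groups. I will check that this isomorphism carries meridian subgroup onto meridian subgroup. The isomorphism constructed in its proof is built from the Van Kampen decompositions. Each factor is the identity on $G=\pi_1(N^5\setminus(U\cup M_{\pm1}))$ and sends the local generators $a'_\pm$ (etc.) to one another. The meridian subgroup is the kernel of the map to $\pi_1(N^5)$, and this map is compatible with both decompositions: both are obtained by filling in $M_{\pm1}$, and the filled spaces coincide. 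Hence the kernels correspond, and the quotients are isomorphic. Alternatively, and more cleanly, I would argue directly that both quotients are canonically $\pi_1(N^5)$, as explained next.

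The key observation for the general case is the following. The quotient of the knot group by its meridian subgroup is, by definition of the meridian subgroup as the kernel of $\pi_1(N^5\setminus f(M^3))\to\pi_1(N^5)$, isomorphic to the image of this homomorphism. So I need that the homomorphism is surjective. By general position, $f(M^3)$ has codimension $2\ge 2$ in $N^5$, so every loop in $N^5$ can be pushed off the image. Therefore the map is surjective, and the quotient is $\pi_1(N^5)$ for both $f_0$ and $f_1$.

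To match the paper's intended route via remark \ref{rmk:meridcyc}, I would also phrase it through presentations. By theorems \ref{thm:noell} and \ref{thm:cyc}, the two knot groups have presentations with the same generators whose relations differ only in commutators $ab=ba$, where by remark \ref{rmk:meridcyc} at least one of $a,b$ is a meridian class. Hence, after factoring by the normal closure of the meridians, the extra relation becomes trivial: if, say, $a$ is killed, then $ab=ba$ holds automatically. I must also check that the meridian subgroups correspond under the identification of generators. By remark \ref{rmk:meridgen}, each meridian subgroup is the normal closure of a single meridian class, and meridian classes of $f_{-1}$ and $f_1$ supported away from $U$ coincide as elements of the common $G$. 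So adding the relation killing that one meridian gives the same presentation on both sides, modulo the killed commutator.

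The main obstacle is this last bookkeeping: making sure that a meridian chosen in $N^5\setminus(U\cup M_{\pm1})$ really represents the meridian of both immersions under the identifications. This requires $M_s\setminus U$ to be nonempty, which holds after shrinking $U$. Given the surjectivity argument, however, the whole statement is essentially immediate, and I would present that argument as the proof.
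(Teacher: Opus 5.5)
Your proof is correct, and the argument you say you would actually present (your second paragraph) takes a genuinely different route from the paper's.

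The paper gets the corollary from Theorems \ref{thm:noell} and \ref{thm:cyc} via Remark \ref{rmk:meridcyc}. The two knot groups have presentations differing only in a relation $ab=ba$ in which $a$ is a meridian class, and that relation becomes vacuous once the meridian subgroup is factored out. This is essentially your third paragraph.

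You instead use that the meridian subgroup is, by the paper's definition, the kernel of $\pi_1(N^5\setminus f(M^3))\to\pi_1(N^5)$. This map is onto, because a loop can be perturbed off the $3$-dimensional image while keeping a neighbourhood of the basepoint fixed, since $1+3<5$. Hence each quotient is canonically $\pi_1(N^5)$.

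Your route is shorter and needs none of the local analysis. It also shows more: the quasi-holomorphic homotopy hypothesis is superfluous. The quotients agree for any two immersions of (even different) $3$-manifolds into the same $N^5$, so the corollary records nothing specific to the homotopy.

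The paper's route stays within its presentation-level bookkeeping. It exhibits how the change at an elliptic self-tangency is absorbed by the meridians, which is the mechanism behind the neighbouring corollaries, but for this statement it is a detour.

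Consequently your first paragraph can be dropped, as can your concern about choosing a meridian away from $U$. The only tacit assumption worth stating is that $f(M^3)$ is closed in $N^5$ (e.g.\ $M^3$ compact or $f$ proper), which is implicit in the paper as well.
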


Remarks \ref{rmk:meridcyc} and \ref{rmk:meridgen} together also imply:

\begin{crly}
  If $f_0,f_1\colon M^3\imto N^5$ are quasi-holomorphically homotopic immersions, and for both $f_0$ and $f_1$ there is a meridian class in the centre of the knot group (or, equivalently, the meridian subgroup is in the centre), then the knot groups of $f_0$ and $f_1$ are isomorphic.
\end{crly}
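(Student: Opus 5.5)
The plan is to introduce, for any stable immersion $f\colon M^3\imto N^5$, a quotient of its knot group in which the meridian subgroup becomes central, and to show that this quotient is invariant under every move of a quasi-holomorphic homotopy. Write $K(f):=\pi_1(N^5\setminus f(M^3))$ and let $\mathcal M(f)\subset K(f)$ be its meridian subgroup. Define
$$Q(f):=K(f)\big/\big\langle\!\big\langle [m,g]\ :\ m\in\mathcal M(f),\ g\in K(f)\big\rangle\!\big\rangle.$$
By remark \ref{rmk:meridgen}, $\mathcal M(f)$ is the normal closure of a single meridian class $m_0$.

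First I check that $K(f)=Q(f)$ whenever some meridian class is central in $K(f)$. If $m_0$ is central, its normal closure is the cyclic subgroup $\langle m_0\rangle$, which is central. Hence $\mathcal M(f)$ is central, all the relators $[m,g]$ are trivial, and $K(f)=Q(f)$. Since the corollary's hypotheses give this for both $f_0$ and $f_1$, it suffices to prove $Q(f_0)\cong Q(f_1)$. Because the hypothesis concerns only the endpoints, this step avoids needing anything about the intermediate immersions. As in the proofs of theorems \ref{thm:noell} and \ref{thm:cyc}, I then reduce to a single instability at $s=0$ and check that $Q(f_{-1})\cong Q(f_1)$.

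For cross-cap, triple point and hyperbolic self-tangency moves, the isomorphism from the proof of theorem \ref{thm:noell} is the identity on $G=\pi_1(N^5\setminus(U\cup M_{\pm1}))$. It sends the generators $a'_-,b'_-,c'_-$ to $a'_+,b'_+,c'_+$, possibly permuted. All of these are classes of fibres of normal circle bundles of $f_{\pm1}$, hence meridian classes. Moreover, a meridian class of $f_{-1}$ represented by a fibre outside $U$ lies in $G$ and is also a meridian class of $f_1$. So the isomorphism maps one meridian class to a meridian class, and therefore, by remark \ref{rmk:meridgen}, maps $\mathcal M(f_{-1})$ onto $\mathcal M(f_1)$. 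It consequently descends to an isomorphism $Q(f_{-1})\cong Q(f_1)$.

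For an elliptic move, theorem \ref{thm:cyc} and remark \ref{rmk:index} give presentations with $K(f_{\pm1})$ equal to $P$ and $P\cup\{[a,b]\}$ for a common presentation $P$, with the identification again the identity on $G$. Here $a$ and $b$ are both classes of normal circle fibres, so both are meridians. Let $K$ be the group presented by $P$ alone. Then $Q$ of either immersion equals $K$ modulo the relators $[m,g]$. In this quotient, $[a,b]$ is already among the relators, since $a$ is a meridian, so adding $[a,b]$ changes nothing. It remains to check that the meridian subgroups correspond under the identification, which holds for the same reason as in the non-elliptic case: a meridian class in $G$ is common to both. Therefore $Q(f_{-1})\cong Q(f_1)$.

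The step I expect to be the main obstacle is verifying that the van Kampen identifications send meridian subgroups to meridian subgroups. The presentations in the proofs of the two theorems are given only up to isomorphism, so this requires tracing that a fixed meridian fibre away from $U$ is carried to itself. I would handle this by choosing the base meridian outside $U$, where $f_s$ is constant in $s$.
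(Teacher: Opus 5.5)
Your argument is correct, and it is organised differently from the paper's. The paper derives the corollary in one line from remarks \ref{rmk:meridcyc} and \ref{rmk:meridgen}. The extra relation $ab=ba$ in theorem \ref{thm:cyc} involves a meridian class, and a central meridian class makes the whole meridian subgroup central. So the relation already holds, and the elliptic move does not change the knot group.

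Applied move by move, that argument needs the meridians to be central on the side of each elliptic move where the relation is absent. Those are intermediate immersions, and the hypothesis (stated only for $f_0$ and $f_1$) does not cover them when there are several elliptic moves. Your quotient $Q(f)=K(f)/\langle\!\langle[\mathcal M(f),K(f)]\rangle\!\rangle$ uses the same two inputs but is invariant under every move. So the hypothesis is needed only at the endpoints, to get $K(f_i)=Q(f_i)$. As a by-product, $Q$ is a quasi-holomorphic homotopy invariant that refines corollary \ref{crly:meridfactor}.

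The step you flagged as the main obstacle is easier than you expect. Since $\pi_1(N^5)\cong\pi_1(N^5\setminus U)$, both van Kampen descriptions map to $\pi_1(N^5)$ through $G$, with $a',b',c'$ going to $1$ because $\ol U$ is a ball. The identifications are the identity on $G$, so they commute with these maps and carry kernels, i.e.\ meridian subgroups, onto each other. In the elliptic case this happens via the quotient map. No meridian fibre outside $U$ needs to be chosen.
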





The combination of remarks \ref{rmk:meridbound} and \ref{rmk:index} yields:

\begin{crly}
  \label{crly:closeloops}
  Let $f_0,f_1\colon M^3\imto N^5$ be stable immersions, let $W_0,W_1\subset N^5$ be tubular neighbourhoods of the images of $f_0,f_1$ respectively, and denote for $i=0,1$ by $j_i$ 
  the embedding of the boundary $\partial W_i\into N^5\setminus f_i(M^3)$. 
  Suppose that the induced homomorphism ${j_0}_\#$ 
  in fundamental groups is surjective, i.e. all elements in the knot group of $f_0$ are represented by loops in $W_0$. If there is a quasi-holomorphic homotopy $f_s\colon M^3\to N^5,~s\in[0,1]$ whose every elliptic self-tangency move corresponds to a $0$-handle attachment when moving from $0$ to $1$, then the induced homomorphism ${j_1}_\#$ 
  is surjective as well, i.e. all elements in the knot group of $f_1$ are represented by loops in $W_1$.
\end{crly}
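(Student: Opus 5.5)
We prove corollary \ref{crly:closeloops} by induction over the finitely many exceptional parameters of the homotopy $f_s$, $s\in[0,1]$. Surjectivity of ${j_s}_\#$ does not depend on the choice of tubular neighbourhood, and by proposition \ref{prop:isot} it is invariant under homotopies through stable immersions. It therefore suffices to treat a single move, localised as in subsection \ref{ssec:prfmain}: $f_s$ is stable for $s\neq 0$, the instability sits in the coordinate neighbourhood $U$, and we show that surjectivity for $f_{-1}$ implies surjectivity for $f_1$.

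We use the van Kampen decomposition from the proofs of theorems \ref{thm:noell} and \ref{thm:cyc}. It gives
$$\pi_1(N^5\setminus M_{\pm1})\cong\big(G*\pi_1(\ol U\setminus M_{\pm1})\big)/R_\pm .$$
In every case computed there, $\pi_1(\ol U\setminus M_{\pm1})$ is generated by the images $j^\pm_\#$ of fibres of the normal circle bundle of $f_{\pm1}$, i.e.\ by meridian classes. The loops representing these classes can be chosen inside $\partial W_{\pm1}\cap U$ (remark \ref{rmk:meridbound}). Moreover, the group $G$ and its image in both knot groups are canonically the same, since $N^5\setminus(U\cup M_s)$ does not depend on $s$.

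The key step is to express the hypothesis in terms of generators. Choose the tubular neighbourhoods $W_{\pm1}$ to agree outside $U$. Every element of $G$, viewed in $\pi_1(N^5\setminus M_{-1})$, is represented by a loop in $\partial W_{-1}$. We need the corresponding statement in $\pi_1(N^5\setminus M_1)$. Since both groups are generated by $G$ together with meridians, and meridians always lie in the image of ${j_1}_\#$, it suffices to show that each element of $G$ lies in the image of ${j_1}_\#$.

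We split into cases according to the move.
\begin{itemize}
\item \emph{Cross-cap, triple point and hyperbolic self-tangency moves.} The proof of theorem \ref{thm:noell} gives an isomorphism of knot groups that is the identity on $G$ and matches the meridian generators. This isomorphism is induced by a homeomorphism of the complements outside $U$ carrying $\partial W_{-1}\setminus U$ to $\partial W_1\setminus U$. Inside $U$, both $\partial W_{\pm1}\cap U$ carry loops representing all of $\pi_1(\ol U\setminus M_{\pm1})$. Hence the image of ${j_{-1}}_\#$ is carried into the image of ${j_1}_\#$.
\item \emph{$0$-handle elliptic move.} By remark \ref{rmk:index}, moving from $-1$ to $1$ only adds the relation $ab=ba$. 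So $\pi_1(N^5\setminus M_1)$ is a quotient of $\pi_1(N^5\setminus M_{-1})$ by a map that is the identity on $G$ and on $a,b$. Under this quotient, surjective images remain surjective, provided the loops in $\partial W_{-1}$ map to loops in $\partial W_1$.
\end{itemize}

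The main obstacle is this last proviso. Outside $U$ it is automatic. Inside $U$, we must check that a loop in $\partial W_{-1}\cap\ol U$ is homotopic in $\ol U\setminus M_1$ to a loop in $\partial W_1\cap\ol U$. The plan is to note that $\pi_1(\ol U\setminus M_1)\cong\Z^2$ is generated by the two meridians $a'_+,b'_+$, which lie on $\partial W_1$. A loop crossing $\ol U$ can then be pushed, rel its endpoints on $\partial U$, to a product of meridians on $\partial W_1$ joined by arcs on $\partial W_1\cap\partial U$. Making this pushing precise requires $\partial W_1\cap\partial U$ to be connected. We expect this to follow from the explicit normal form \ref{form0}, which we would verify directly.

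For the $2$-handle direction the argument fails, because $\pi_1(\ol U\setminus M_{-1})$ would be free and not generated compatibly. This explains the restriction to $0$-handle moves in the statement.
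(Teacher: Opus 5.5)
Your skeleton is the paper's: its proof is just the combination of Remarks~\ref{rmk:meridbound} and~\ref{rmk:index}, i.e.\ across a $0$-handle move the knot group only acquires a commutator of meridian generators, and otherwise it is unchanged. You also correctly isolate the point the paper passes over, namely that representatives in $\partial W_{-1}$ must be transported to representatives in $\partial W_1$. However, your treatment of that step does not work as written, for two reasons.

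First, an arc of a loop in $\partial W_{-1}\cap\ol U$ is in general not an arc in $\ol U\setminus M_1$. In the normal form~\ref{form0} the second sheet at $s=1$ passes through the first sheet, hence through any tube around it. So ``homotopic in $\ol U\setminus M_1$'' has no meaning until you have pushed such arcs, rel endpoints, into $\partial U\setminus M$ in the $s=-1$ picture. That push is allowed: $j^-_\#$ is an isomorphism, so $G\to\pi_1(N^5\setminus M_{-1})$ is injective, and the pushed loop, which lies in $N^5\setminus(U\cup M)$, represents the element of $G$ you started from.

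Second, the connectivity you intend to verify is false. At $s=\pm1$ the sheets meet only along the double circle in the interior of $U$. Hence $M_1\cap\partial U$ is two disjoint $2$-spheres in $\partial U\approx S^4$, and $\partial W_1\cap\partial U$ is two disjoint copies of $S^2\times S^1$. Meridians of the two sheets therefore cannot be joined by arcs in $\partial W_1\cap\partial U$, and the verification you postpone would fail.

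What is actually needed is connectivity of $\partial W_1\cap\ol U$. This does hold at $s=1$, because the two tubes are joined through the corner part, a $T^2$-bundle over the double circle. Combine it with surjectivity of $\pi_1(\partial W_1\cap\ol U)\to\pi_1(\ol U\setminus M_1)\cong\Z^2$, which holds because this group is abelian and generated by fibre classes. Together they let you homotope every arc in $\ol U\setminus M_1$ with endpoints on $\partial W_1\cap\partial U$, rel endpoints, into $\partial W_1$.

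The same two ingredients are also what your non-elliptic case tacitly uses: pushing into $\partial U$ at $s=-1$, and connectivity of $\partial W_1\cap\ol U$, which holds there because all sheets meet.

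In the $0$-handle case there is an even shorter repair.
\begin{enumerate}
\item Each arc of a loop in $\partial W_{-1}$ that enters $\ol U$ stays in one component $\approx D^3\times S^1$ of $\partial W_{-1}\cap\ol U$.
\item Push it inside that component, rel endpoints, into $S^2\times S^1\subset\partial U$. The loop then lies in $\partial W\setminus U$, which is common to both parameters.
\item Injectivity of $G\to\pi_1(N^5\setminus M_{-1})$ and surjectivity of $G\to\pi_1(N^5\setminus M_1)$ finish the proof. The surjectivity holds because $j^+_\#$ is onto, so you do not need the meridian generators separately; those lie in the image of ${j_1}_\#$ only up to conjugation anyway.
\end{enumerate}

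Finally, your explanation of the $2$-handle direction has the groups swapped. There $\pi_1(\ol U\setminus M_{-1})\cong\Z^2$ and $\pi_1(\ol U\setminus M_1)$ is free. What can fail is injectivity of $G\to\pi_1(N^5\setminus M_{-1})$ and connectivity of $\partial W_1\cap\ol U$.
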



  

Theorems \ref{thm:noell} and \ref{thm:cyc}, together with remarks \ref{rmk:meridcyc} and \ref{rmk:index} also imply:

\begin{crly}
  \label{crly:no2h}
  If $f_0,f_1\colon M^3\imto N^5$ are stable immersions such that 
  the knot group of $f_0$ is commutative, and there is a quasi-holomorphic homotopy $f_s\colon M^3\to N^5,~s\in[0,1]$ whose every elliptic self-tangency move corresponds to a $0$-handle attachment when moving from $0$ to $1$, then the knot groups of $f_0$ and $f_1$ are isomorphic.
\end{crly}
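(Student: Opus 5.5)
By induction on the number of exceptional parameters of the homotopy, it suffices to handle a homotopy $f_s,~s\in[-1,1]$ with a single instability at $s=0$. So I would reduce to that case and prove that the knot group is unchanged across each move of the homotopy. For cross-cap, triple point and hyperbolic self-tangency moves this is exactly theorem \ref{thm:noell}. So only elliptic self-tangency moves need attention, and by hypothesis every one of them is a $0$-handle attachment when moving from $0$ to $1$. Up to reparametrisation this is the index-$0$ move of proposition \ref{prop:qholformula}, going from $f_{-1}$ to $f_1$.

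For such a move I would reuse the presentations obtained in the proof of theorem \ref{thm:cyc}. By remark \ref{rmk:index}, the group $\pi_1(N^5\setminus f_1(M^3))$ is obtained from $\pi_1(N^5\setminus f_{-1}(M^3))$ by adding the single relation $ab=ba$. Here $a,b$ are the images $i^+_\#(a_+),i^+_\#(b_+)$ of the meridian loops of the two local sheets. The claim then reduces to the following: if the earlier group is commutative, the relation $ab=ba$ already holds in it. This is immediate, so the quotient map is an isomorphism.

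For the induction to continue, the later group $\pi_1(N^5\setminus f_1(M^3))$ must again be commutative. It is, because it is isomorphic to the earlier, commutative group. Moves of the other three types preserve the group up to isomorphism by theorem \ref{thm:noell}, so commutativity also propagates through them. Chaining all the moves gives that the knot groups of $f_0$ and $f_1$ are isomorphic.

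The main point to check is the direction in remark \ref{rmk:index}. I need that at a $0$-handle attachment the relation is added at the \emph{later} parameter, i.e. that the later group is the quotient of the earlier one, and not the other way round. If it were reversed, commutativity of a quotient would say nothing about the larger group. I would verify this against the proof of theorem \ref{thm:cyc}. There, for $g_{-1}$ the local complement $\ol U\setminus M_{-1}$ has free fundamental group on $a'_-,b'_-$, while for $g_1$ the group of $\ol U\setminus M_1$ is $\Z^2$. The extra relation therefore indeed appears at $s=1$, which confirms the direction for the index-$0$ case. Remark \ref{rmk:meridcyc} is not needed beyond this.
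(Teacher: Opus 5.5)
Your proof is correct and follows the paper's intended argument: the paper states the corollary as a direct consequence of theorems \ref{thm:noell} and \ref{thm:cyc} together with remark \ref{rmk:index}. That is, the non-elliptic moves preserve the knot group, and at each forward $0$-handle move the later group is the earlier one modulo the normal closure of $[a,b]$, which is trivial once the earlier group is commutative. The paper also cites remark \ref{rmk:meridcyc}, but, as you observe, it is not needed under the commutativity hypothesis.
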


\begin{ex}
  \label{ex:sphno2h}
  The standard embedding $S^3\subset S^5$ is such that its knot group is $\Z$, generated by a meridian class. 
  Thus corollary \ref{crly:no2h} implies that any immersion $f\colon S^3\imto S^5$, which is quasi-holomorphically homotopic without index-$2$ self-tangency moves to the standard embedding, has the same knot group. In other words such immersions are ``simple'' in the same sense as defined for usual knots. In particular, this holds if the quasi-holomorphic homotopy has only one elliptic self-tangency move, since that necessarily corresponds to a $0$-handle attachment.
\end{ex}










The above example could be interesting because of applications on link immersions of map germs. In \cite{sphsl} we show that the quasi-holomorphic concordance between the standard embedding and the link immersion constructed in the proof of proposition \ref{prop:link} can be chosen in such a way that it has only one elliptic self-tangency move. 
Based on this, we ask the following.

\begin{ques}
  \label{ques:link}
  Let $f\colon S^3\imto S^5$ be the link immersion of a finitely $\AA$-determined holomorphic map germ $(\C^2,0)\to(\C^3,0)$. Is $f$ quasi-holomorphically homotopic to the standard embedding $S^3\subset S^5$? If so, can this homotopy be chosen such that its elliptic self-tangency moves all correspond to $0$-handle attachments?
\end{ques}



If the construction in the proof of proposition \ref{prop:link} could be altered in such a way that the concordance becomes a homotopy (i.e. if the complementary handles could be removed), that would give a positive answer to the above question. We note that the removal of complementary handles is always possible in the case of maps of codimension at least $3$, but for codimension-$2$ maps it is not; see \cite{borpow}. A suggestive evidence to why this might still hold for the concordance of link immersions constructed in the proof of proposition \ref{prop:link} is the following.

\begin{rmk}
  Némethi and Pintér \cite{associmm} showed several restrictions on immersions $S^3\imto S^5$ which should be satisfied if they are link immersions of finitely $\AA$-determined germs. Many of these restrictions are versions of saying that link immersions are ``not very knotted''; in particular, they proved that if a link immersion is an embedding, then it can only be the standard embedding $S^3\subset S^5$ up to isotopy. If the answer to question \ref{ques:link} was positive, then corollary \ref{crly:no2h} would also show the ``simplicity'' of link immersions as noted in example \ref{ex:sphno2h}. 
  It seems to be an interesting problem to clarify and determine exactly ``how knotted'' such an immersion can be.
\end{rmk}

\end{document}